\def\scaleint#1{\vcenter{\hbox{\scaleto[3ex]{\displaystyle\int}{#1}}}}
\newcommand\tenq[2][1]{%
\def\useanchorwidth{T}%
\ifnum#1>1%
\stackunder[0pt]{\tenq[\numexpr#1-1\relax]{#2}}{\!\scriptscriptstyle\thicksim}%
\else%
\stackunder[1pt]{#2}{\!\scriptstyle\thicksim}%
\fi%
}
\DeclareRobustCommand\widecheck[1]{{\mathpalette\@widecheck{#1}}}
\def\@widecheck#1#2{%
    \setbox\z@\hbox{\m@th$#1#2$}%
    \setbox\tw@\hbox{\m@th$#1%
       \widehat{%
          \vrule\@width\z@\@height\ht\z@
          \vrule\@height\z@\@width\wd\z@}$}%
    \dp\tw@-\ht\z@
    \@tempdima\ht\z@ \advance\@tempdima2\ht\tw@ \divide\@tempdima\thr@@
    \setbox\tw@\hbox{%
       \raise\@tempdima\hbox{\scalebox{1}[-1]{\lower\@tempdima\box
\tw@}}}%
    {\ooalign{\box\tw@ \cr \box\z@}}}
\def\given{\,|\,}
\def\Biggiven{\,\Big{|}\,}
\def\tr{\mathop{\text{tr}}\kern.2ex}
\def\tZ{{\tilde Z}}
\def\tR{{\tilde r}}
\def\tW{{\tilde W}}
\def\tR{{\tilde R}}
\def\tL{{\tilde L}}
\def\P{{\mathrm P}}
\def\E{{\mathrm E}}
\def\d{{\mathrm d}}
\newcommand{\zahl}[1]{\llbracket #1\rrbracket}
\newcommand\yestag{\addtocounter{equation}{1}\tag{\theequation}}
\newcolumntype{L}[1]{>{\raggedright\let\newline\\\arraybackslash\hspace{0pt}}m{#1}}
\newcolumntype{C}[1]{>{  \centering\let\newline\\\arraybackslash\hspace{0pt}}m{#1}}
\newcolumntype{R}[1]{>{ \raggedleft\let\newline\\\arraybackslash\hspace{0pt}}m{#1}}
\newcolumntype{d}[1]{D{.}{.}{#1}}
\newcolumntype{H}{>{\setbox0=\hbox\bgroup}c<{\egroup}@{}}
\newcolumntype{Z}{>{\setbox0=\hbox\bgroup}c<{\egroup}@{\hspace*{-\tabcolsep}}}
\newcolumntype{b}{X}
\newcolumntype{s}{>{\hsize=.5\hsize}X}
\numberwithin{equation}{section}
\newtheorem{theorem}{Theorem}[section]
\newtheorem{lemma}{Lemma}[section]
\newtheorem{proposition}{Proposition}[section]
\providecommand{\customgenericname}{}
\newcommand{\newcustomtheorem}[2]{%
  \newenvironment{#1}[1]
  {%
   \renewcommand\customgenericname{#2}%
   \renewcommand\theinnercustomgeneric{##1}%
   \innercustomgeneric
  }
  {\endinnercustomgeneric}
}
\theoremstyle{definition}
\newcommand{\mylabel}[2]{#2\def\@currentlabel{#2}\label{#1}}
\begin{document}

\setlength{\abovedisplayskip}{5pt}
\setlength{\belowdisplayskip}{5pt}
\setlength{\abovedisplayshortskip}{5pt}
\setlength{\belowdisplayshortskip}{5pt}
\hypersetup{colorlinks,breaklinks,urlcolor=blue,linkcolor=blue}

\title{\LARGE On the failure of the bootstrap for Chatterjee's rank correlation}

\author{Zhexiao Lin\thanks{Department of Statistics, University of California, Berkeley, CA 94720, USA; e-mail: {\tt zhexiaolin@berkeley.edu}}~~~and~
Fang Han\thanks{Department of Statistics, University of Washington, Seattle, WA 98195, USA; e-mail: {\tt fanghan@uw.edu}}
}

\date{\today}

\maketitle

\vspace{-1em}

\begin{abstract}
While researchers commonly use the bootstrap for statistical inference, many of us have realized that the standard bootstrap, in general, does not work for Chatterjee's rank correlation. In this paper, we provide proof of this issue under an additional independence assumption, and complement our theory with simulation evidence for general settings. Chatterjee's rank correlation thus falls into a category of statistics that are asymptotically normal but bootstrap inconsistent. Valid inferential methods in this case are Chatterjee's original proposal (for testing independence) and \cite{lin2022limit}'s analytic asymptotic variance estimator (for more general purposes). 
\end{abstract}

{\bf Keywords}: bootstrap, rank correlation, tied data.

\section{Introduction}\label{sec:intro}

Rank correlation is an essential tool for measuring the association between random variables. Its development is closely linked to the history of statistics as a discipline and has involved many notable figures, including Charles Spearman, Sir Maurice Kendall, Wassily Hoeffding, Jack Kiefer, Murray Rosenblatt, Jaroslav H\'{a}jek, Eric Lehmann, Herman Chernoff, and Richard Savage \citep{Spearman1904,Spearman1906,Kendall1938,kendall1948rank,MR0004426,MR0029139,Hoeffding1994,MR0125690,MR1680991,MR79383,MR100322}. Unlike other correlation coefficients, a rank correlation relies solely on the rankings of the original data, making it (1) exactly distribution-free when testing independence of continuous random variables; (2) invariant to marginal monotonic transformations (cf. copulas); and (3) robust in the face of outliers and heavy-tailedness. Its usefulness, therefore, is self-explanatory.

Given the remarkable progress made in this area over the past century, it is impressive that, recently, Sourav Chatterjee devised a new rank correlation that is both new and appealing from multiple perspectives. Specifically, consider an independent and identically distributed (i.i.d.) sample $\{X_i,Y_i\}_{i=1,\ldots,n}$ from a pair of scalars, $(X,Y)$, with joint and marginal distribution functions $F_{X,Y}$ and $F_X$, $F_Y$, respectively. Let 
\[
R_i:=\sum_{j=1}^n\ind(Y_j\leq Y_i)~~~ {\rm and}~~~ L_i:= \sum_{j=1}^n\ind(Y_j\ge Y_i) 
\]
be the rank and ``reversed'' rank of $Y_i$ with $\ind(\cdot)$ representing the indicator function, and let $\{[i],i=1,\ldots,n\}$ be a rearrangement of $\{1,\ldots,n\}$ such that $X_{[1]} \le \cdots \le X_{[n]}$ with ties broken at random. \cite{chatterjee2020new} introduced the following statistic
\begin{align}\label{eq:barxin*}
  \xi_n &:= 1 - \frac{n}{2\sum_{i=1}^n L_i (n-L_i)} \sum_{i=1}^{n-1} \Big\lvert R_{[i+1]} - R_{[i]} \Big\rvert,
\end{align}
which he showed to be a strongly consistent estimator of Dette-Siburg-Stoimenov's dependence measure \citep{MR3024030},
\begin{align}\label{eq:xi}
\xi=\xi(X,Y):=\;&  \frac{\scaleint{4.5ex}\,\Var\big\{\E\big[\ind\big(Y\geq y\big) \given X \big] \big\} \d F_{Y}(y)}{\scaleint{4.5ex}\,\Var\big\{\ind\big(Y\geq y\big)\big\}\d F_{Y}(y)},
\end{align}
as long as $Y$ is not almost surely a constant. 

Why is $\xi_n$ in \eqref{eq:barxin*} appealing? There are three reasons Sourav Chatterjee outlines. First, it has a simple form. Second, it has a normal limiting null distribution. Finally, it measures a dependence measure, $\xi$ in \eqref{eq:xi}, that satisfies R\'{e}nyi's criteria \citep{MR0115203} and Bickel's definition of a measure of functional dependence \citep{bickel2022measures}: $\xi$ is zero if and only if $Y$ is independent of $X$ and one if and only if $Y$ is a measurable function of $X$. Therefore, $\xi_n$ is a rank correlation that can accurately quantify both independence and functional dependence. This is something that all aforementioned rank correlations, including Spearman's $\rho$, Kendall's $\tau$, Hoeffding's $D$, Blum-Kiefer-Rosenblatt's $r$, and Bergsma-Dassios-Yanagimoto's $\tau^*$ \citep{MR3178526,yanagimoto1970measures}, fail to achieve.

Due to these appealing properties, Chatterjee's rank correlation has gained significant interest and a wave of research has emerged exploring its applications and extensions. Notable recent works include \cite{azadkia2019simple}, \cite{cao2020correlations}, \cite{shi2020power}, \cite{gamboa2020global}, \cite{deb2020kernel}, \cite{huang2020kernel}, \cite{auddy2021exact}, \cite{shi2021ac}, \cite{lin2021boosting}, \cite{fuchs2021bivariate}, \cite{azadkia2021fast}, \cite{griessenberger2022multivariate}, \cite{strothmann2022rearranged}, \cite{zhang2022asymptotic}, \cite{bickel2022measures}, \cite{holma2022correlation}, \cite{chatterjee2022estimating}, \cite{lin2022limit}, \cite{han2022azadkia}, \cite{ansari2022simple}, and \cite{zhang2023relationships}. Additionally, brief surveys on recent progress of (Chatterjee's and more) rank correlation methods have been conducted in \cite{han2021extensions} and \cite{chatterjee2022survey}.

This paper aims to investigate the validity of the standard bootstrap \citep{efron1979bootstrap,efron1981nonparametric} when applied to {\it fixed and continuous} $F_{X,Y}$, with $\xi_n$ taking the form \eqref{eq:barxin*} to handle ties in resampled data. We prove, in the simple independence case with $F_{X,Y}=F_XF_Y$, that the standard bootstrap results in an inconsistent estimator of $\xi_n$'s asymptotic variance, and the bootstrap distribution fails to converge to the limiting distribution of $\sqrt{n}(\xi_n-\xi)$. Simulations further complement the theory, indicating that the standard bootstrap will likely also fail in general settings with $F_{X,Y}\ne F_XF_Y$, even though $\sqrt{n}(\xi_n-\xi)$ still weakly converges to a normal distribution  \citep{lin2022limit}. Chatterjee's rank correlation thus falls into a class of statistics that are root-$n$ consistent, asymptotically normal, but bootstrap inconsistent --- a class including Bickel and Freedman's U-statistics \citep[Section 6]{bickel1981some}, Hodges estimator \citep[Pages 213-214]{beran1982estimated}, and  Abadie and Imbens's matching estimator \citep{abadie2008failure}.

It is worth mentioning that there are valid alternatives to using the standard bootstrap for inferring $\xi$ from $\xi_n$. Sourav Chatterjee derived the limiting null distribution of $\xi_n$ for testing independence between $X$ and $Y$ in \cite{chatterjee2020new}. In \cite{lin2022limit}, conditions were proven under which $\xi_n$ is root-$n$ consistent and asymptotically normal, and an analytic estimator of its asymptotic variance was proposed. This paper thus helps to justify the value of these derivations by demonstrating the inconsistency of an otherwise attractive alternative.

\section{Main results}

\subsection{Setup}

In this paper we consider the standard model of $(X_1,Y_1), \ldots, (X_n,Y_n)$ to be $n$ independent copies of $(X,Y)$ drawn from a fixed and continuous $F_{X,Y}$ of support in the two-dimensional real space. In this case, with probability one there is no tie in the observation, and hence $\xi_n$ admits the following simpler form,
\begin{align}\label{eq:barxin}
\xi_n &:= 1 - \frac{3}{n^2-1} \sum_{i=1}^{n-1} \Big\lvert R_{[i+1]} - R_{[i]} \Big\rvert.
\end{align}

To implement the standard bootstrap, consider $(\mX_b,\mY_b) = \{(X_{b,i},Y_{b,i})\}_{i=1}^n$ to be the bootstrap sample, of size $n$ and likely embracing ties, by sampling with replacement from $(\mX,\mY):=\{(X_i,Y_i)\}_{i=1}^n$. Let $\tilde{\xi}_b$ be Chatterjee's rank correlation calculated using the bootstrap sample $(\mX_b,\mY_b)$. More specifically, write
\begin{align}\label{eq:ib}
\Big\{[i]_b,i=1,\ldots,n\Big\} 
\end{align}
to be a rearrangement of $\{1,\ldots,n\}$ such that $X_{b,[1]_b} \le \cdots \le X_{b,[n]_b}$, with ties in $\{X_{b,i}\}_{i=1}^n$ broken {\it in arbitrary way}. Let 
\[
R_{b,i}:=\sum_{j=1}^n\ind(Y_{b,j}\leq Y_{b,i})~~~ {\rm and} ~~~L_{b,i}:= \sum_{j=1}^n\ind(Y_{b,j}\ge Y_{b,i})
\]
be the ranks and reversed ranks of the bootstrap sample. The bootstrapped rank correlation $\tilde{\xi}_b$ is then defined to be 
\begin{align}\label{eq:xib}
  \tilde{\xi}_b &:= 1 - \frac{n}{2\sum_{i=1}^n L_{b,i} (n-L_{b,i})} \sum_{i=1}^{n-1} \Big\lvert R_{b,[i+1]_b} - R_{b,[i]_b} \Big\rvert,
\end{align}
namely, plugging \eqref{eq:barxin*} to the bootstrap sample.

In this paper, we investigate two commonly used versions of the bootstrap in empirical research. The first version involves centering the bootstrap sample at $\xi_n$, which is calculated using the original sample. The second version involves centering the bootstrap sample at the mean of the bootstrap distribution $\E[\tilde{\xi}_b \given \mX,\mY]$. One can then estimate the asymptotic variance using 
\[
{\rm either}~~~n\E[(\tilde{\xi}_b - \xi_n)^2 \given \mX,\mY]~~~{\rm or}~~~n\Var[\tilde{\xi}_b \given \mX,\mY],
\]
assuming an infinite number of replications for bootstrap. In addition, it is of interest to evaluate the closeness of the bootstrap distributions of 
\[
\sqrt{n}(\tilde{\xi}_b - \xi_n)\given \mX,\mY~~~{\rm and}~~~~ \sqrt{n}(\tilde{\xi}_b - \E[\tilde{\xi}_b\given\mX,\mY]) \given \mX,\mY
\]
to that of $\sqrt{n}(\xi_n-\xi)$.

\subsection{Theory}

Our theory section has to be focused on the simple independence case with $F_{X,Y}=F_XF_Y$, only under which we are able to provide the otherwise formidable calculation of the limits of $n\E[(\tilde{\xi}_b - \xi_n)^2]$ and $n\Var[\tilde{\xi}_b \given \mX,\mY]$. 

The following result of Chatterjee establishes the limiting distribution of $\xi_n$ under independence.

\begin{proposition}[Theorem 2.1, \cite{chatterjee2020new}]\label{thm:chatterjee}
Assume $F_{X,Y}=F_XF_Y$ is fixed and continuous. We then have $\sqrt{n}\xi_n$ weakly converges to $N(0,2/5)$.
\end{proposition}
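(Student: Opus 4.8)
The plan is to reduce $\xi_n$ to a functional of a single uniformly random permutation and then run a central limit theorem for that functional. First I would use continuity to note that, with probability one, there are no ties, so $\xi_n$ takes the no-tie form \eqref{eq:barxin} and the $Y$-ranks are a genuine permutation of $\{1,\dots,n\}$. Under independence $F_{X,Y}=F_XF_Y$, the rearrangement $[1],\dots,[n]$ induced by the $X$-values is independent of the $Y$-ranks; hence the reordered rank vector $(R_{[1]},\dots,R_{[n]})$ is \emph{uniformly distributed} over all $n!$ permutations. Writing it as $\pi$, I obtain
\[
\sqrt{n}\,\xi_n = -\frac{3\sqrt{n}}{n^2-1}\big(T_n-\E[T_n]\big), \qquad T_n:=\sum_{i=1}^{n-1}\big|\pi(i+1)-\pi(i)\big|,
\]
so that everything reduces to the fluctuations of the permutation statistic $T_n$.

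Next I would pin down the first two moments. Since $(\pi(i),\pi(i+1))$ is a uniformly random ordered pair of distinct values, $\E|\pi(i+1)-\pi(i)|=(n+1)/3$, giving $\E[T_n]=(n^2-1)/3$ and therefore $\E[\xi_n]=0$ \emph{exactly}; the centering is thus automatic under the null. For the variance I would expand $\Var(T_n)=\sum_{i,j}\cov(A_i,A_j)$ with $A_i:=|\pi(i+1)-\pi(i)|$ and, invoking exchangeability, group the terms into diagonal, nearest-neighbour ($|i-j|=1$), and far-apart ($|i-j|\ge2$) contributions. The target is
\[
\Var\big(\sqrt{n}\,\xi_n\big)=\frac{9n}{(n^2-1)^2}\,\Var(T_n)\longrightarrow \frac{2}{5},
\]
equivalently $\Var(T_n)=\tfrac{2}{45}n^3(1+o(1))$. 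Here lies the first delicate point: one is tempted to approximate $R_{[i]}/n$ by i.i.d.\ uniforms and treat the $A_i$ as a $1$-dependent sequence of rescaled spacings, but this caricature yields the \emph{wrong} constant $3/5$. The discrepancy is genuine: the global permutation constraint induces negative correlations that, while of relative order $O(1/n)$, act across $\Theta(n^2)$ far-apart pairs and therefore contribute at the leading $n^3$ order, lowering the constant from $3/5$ to $2/5$. The variance must thus be computed on the permutation itself, not on an independence surrogate.

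Finally I would establish asymptotic normality of $(T_n-\E[T_n])/n^{3/2}$. My preferred route is a martingale central limit theorem: reveal $\pi(1),\pi(2),\dots$ one coordinate at a time and form the Doob martingale $M_i=\E[T_n\mid \pi(1),\dots,\pi(i)]$, whose conditional law of the unrevealed entries stays uniform and hence tractable. Each increment $M_i-M_{i-1}$ affects only the two incident summands and is bounded by $O(n)$, so after the $n^{-3/2}$ normalization the increments are $O(n^{-1/2})$ and the conditional Lindeberg condition holds trivially. Equally viable alternatives are Stein's method of exchangeable pairs, taking $\pi'$ to be $\pi$ composed with a random adjacent transposition, or a method-of-moments argument showing that all cumulants of order $\ge 3$ vanish after normalization. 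Combining any of these with the variance limit yields $\sqrt{n}\,\xi_n\Rightarrow N(0,2/5)$.

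I expect the \textbf{main obstacle} to be this last step — concretely, showing that the sum of conditional variances $\sum_i \E[(M_i-M_{i-1})^2\mid \pi(1),\dots,\pi(i-1)]$ concentrates at the deterministic limit $\tfrac{2}{45}n^3$ identified in the variance step. This is exactly where the long-range dependence of the permutation (the same constraint that corrects $3/5$ to $2/5$) must be controlled, and it is the part that resists any reduction to independent summands.
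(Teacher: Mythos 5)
The paper itself contains no proof of this proposition: it is imported verbatim from \citet[Theorem 2.1]{chatterjee2020new}, so your attempt can only be judged against Chatterjee's original argument and against the machinery this paper builds for the bootstrapped statistic. Your setup is correct as far as it goes: under $F_{X,Y}=F_XF_Y$ with continuous marginals, $(R_{[1]},\ldots,R_{[n]})$ is indeed a uniform permutation $\pi$, the centering $\E[\xi_n]=0$ is exact, and your diagnosis of the constant is right --- treating $A_i=|\pi(i+1)-\pi(i)|$ as a $1$-dependent spacing sequence gives $\sum_i\Var(A_i)\approx n^3/18$ plus adjacent covariances $\approx n^3/90$, hence the wrong limit $3/5$, while the without-replacement corrections of order $n$ per pair, summed over $\Theta(n^2)$ distant pairs, contribute the $-n^3/45$ that brings it down to $2/5$.

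Nevertheless, what you have written is a plan rather than a proof, and it stops exactly where the difficulty sits. First, the variance limit $\Var(T_n)=\tfrac{2}{45}n^3(1+o(1))$ is asserted, not derived: it requires the joint moments of $(\pi(i),\pi(i+1),\pi(j),\pi(j+1))$, i.e.\ of four values sampled without replacement --- the analogue of what this paper grinds through in Lemmas~\ref{lemma:var,cond} and \ref{lemma:card} for the bootstrap version --- and that computation is the content of the step, not an afterthought. Second, the martingale CLT is not carried out: as you concede, it needs $\sum_i\E[(M_i-M_{i-1})^2\mid\pi(1),\ldots,\pi(i-1)]$ to concentrate at the deterministic limit, and you offer no argument; this is a second-moment computation at least as involved as the variance calculation itself, and it is precisely where the long-range dependence you flagged must be controlled. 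There is also a local error: the Doob increment does \emph{not} ``affect only the two incident summands.'' Revealing $\pi(i)$ changes the conditional law of every unrevealed entry, hence the conditional expectation of every future $A_j$; each moves by $O(n/(n-i))$, and only after summing these $n-i$ perturbations do you recover the $O(n)$ bound you invoke for Lindeberg.

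For contrast, the proofs in the cited literature avoid the permutation filtration altogether: under independence one may take the data as already sorted in $X$, write $|R_{i+1}-R_i|=\#\{j:Y_i\wedge Y_{i+1}<Y_j\le Y_i\vee Y_{i+1}\}$, and thereby express $T_n$ as a double sum of indicators of i.i.d.\ variables --- the very representation this paper exploits through the sets $S_{i,k}$. Projecting each inner count onto $(Y_i,Y_{i+1})$ splits $T_n$ into a $1$-dependent sum (your $3/5$ part) plus a linear H\'ajek-type correction (the $3/5\to2/5$ part), and classical CLTs for $m$-dependent sequences finish the argument, with all constants coming from elementary i.i.d.\ moment computations. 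If you wish to salvage your route, Stein's method with the exchangeable pair given by a uniform transposition of $\pi$ is likely more tractable than the Doob martingale, since it needs only the pairwise moment computations you have already set up rather than conditional-variance concentration.
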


Below is the main theorem of this paper. 

\begin{theorem}[Bootstrap inconsistency]\label{thm:main}  
    Assuming the same conditions of Proposition \ref{thm:chatterjee}, the following two statements then hold.
 \begin{enumerate}[label=(\roman*)]    
    \item (variance inconsistency)  $n\E[(\tilde{\xi}_b - \xi_n)^2 \given \mX,\mY]$ and $n\Var[\tilde{\xi}_b \given \mX,\mY]$ do not converge to $2/5$ in probability.
    \item (distribution inconsistency) There exists a sequence of measurable events $[\cE_i]_{i=1}^\infty$, satisfying 
    \[
    \liminf_{n \to \infty} \P((X_1,Y_1, \ldots,X_n,Y_n) \in \cE_n)>0, 
    \]
    such that the distributions of $\sqrt{n}(\tilde{\xi}_b - \xi_n)$ and $\sqrt{n}(\tilde{\xi}_b - \E[\tilde{\xi}_b\given\mX,\mY])$ do not converge to $N(0,2/5)$ conditional on $(X_1,Y_1,\ldots,X_n,Y_n) \in \cE_n$.
  \end{enumerate}
\end{theorem}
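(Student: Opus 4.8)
The plan is to condition throughout on the data $(\mX,\mY)$ and to use that, under $F_{X,Y}=F_XF_Y$, the sequence of $Y$-ranks read in increasing $X$-order is a uniformly random permutation $\pi$ of $\{1,\dots,n\}$. The only remaining randomness in $\tilde{\xi}_b$ is the resampling, which I encode by the multiplicities $N_j:=\#\{i:I_i=j\}$ of the i.i.d.\ uniform indices $I_1,\dots,I_n$, so that $(N_1,\dots,N_n)\sim\mathrm{Multinomial}(n,(1/n,\dots,1/n))$. Since repeated draws produce points tied in both coordinates, the bootstrap sample sorted by $X$ is a concatenation of identical blocks, and within a block the ranks $R_{b,\cdot}$ are constant; hence the numerator in \eqref{eq:xib} collapses to a total variation over the \emph{distinct} resampled points. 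Letting $m_r$ be the multiplicity of the observation with $Y$-rank $r$ and $T_r:=\sum_{l=1}^r m_l$, one gets the exact identities $\sum_i L_{b,i}(n-L_{b,i})=\sum_r m_r\,T_{r-1}(n-T_{r-1})$ and $\sum_i|R_{b,[i+1]_b}-R_{b,[i]_b}|=\sum_s|T_{\pi(k_{s+1})}-T_{\pi(k_s)}|$, where $k_1<\dots<k_D$ enumerate the distinct resampled points in $X$-order. This writes $\tilde{\xi}_b$ as a ratio of two explicit multinomial functionals.

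First I would compute the conditional mean. The decisive feature is that only a fraction $1-(1-1/n)^n\to1-e^{-1}$ of the points are ever resampled, so $D\approx(1-e^{-1})n$ while the ranks $T_r$ still fill $\{1,\dots,n\}$. Using $\E[m_r]=1$ and $\E[T_{r-1}]\approx r$ gives $\E[\sum_r m_rT_{r-1}(n-T_{r-1})]\sim n^3/6$; and, since averaging over the uniform $\pi$ makes the retained $Y$-ranks appear in uniformly random $X$-order, the numerator has probability limit $\sim(1-e^{-1})n^2/3$. Consequently a concentration argument over the resampling together with the uniformity of $\pi$ yields
\[
\E[\tilde{\xi}_b\given\mX,\mY]\ \overset{\mathrm{P}}{\longrightarrow}\ 1-(1-e^{-1})=e^{-1}\neq0 .
\]
This nonzero asymptotic bias disposes of the easy halves of the theorem. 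Since $\xi_n=O_{\mathrm P}(n^{-1/2})$ by Proposition \ref{thm:chatterjee}, $n\E[(\tilde{\xi}_b-\xi_n)^2\given\mX,\mY]\ge n(\E[\tilde{\xi}_b\given\mX,\mY]-\xi_n)^2\to\infty$, which is the first claim of part (i); and the conditional mean of $\sqrt n(\tilde{\xi}_b-\xi_n)$ diverges, so on any events of non-vanishing probability its law cannot approach $N(0,2/5)$, which is the uncentered half of part (ii).

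The substantive content is the conditional variance, which drives the remaining claim of part (i) and the centered half of part (ii). Here I would linearize the ratio about its mean (a delta-method expansion whose remainder is controlled by the $O_{\mathrm P}(n^{-1/2})$ relative fluctuations of numerator and denominator), compute to leading order the bootstrap variances and covariance of the two multinomial functionals, and then average over the uniform $\pi$ to obtain a probability limit $v_0:=\lim_n n\Var[\tilde{\xi}_b\given\mX,\mY]$. The crucial arithmetic step is to verify $v_0\neq2/5$. Granting this, put $\cE_n:=\{\,|n\Var[\tilde{\xi}_b\given\mX,\mY]-v_0|<\varepsilon\,\}$ for $\varepsilon$ small enough that $[v_0-\varepsilon,v_0+\varepsilon]$ avoids $2/5$; then $\P((X_1,Y_1,\dots,X_n,Y_n)\in\cE_n)\to1$. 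A conditional central limit theorem for the linearized functional (a Lindeberg/H\'{a}jek-type argument over the resampling) shows that on $\cE_n$ the law of $\sqrt n(\tilde{\xi}_b-\E[\tilde{\xi}_b\given\mX,\mY])$ converges to $N(0,v_0)\neq N(0,2/5)$, completing both remaining claims.

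The main obstacle is exactly this conditional-variance computation. The numerator couples the multinomial multiplicities to the fixed permutation $\pi$ through an absolute-value (total-variation) operation, so neither its bootstrap variance nor its covariance with the denominator can be read off termwise. I expect to need a Poissonization of the multiplicities — replacing $(N_j)$ by independent $\mathrm{Poisson}(1)$ variables and de-Poissonizing at the end — to decouple the summands, followed by a separate concentration step that substitutes the data-dependent coefficients by their averages over the uniform $\pi$. Propagating both the delta-method remainder and the conditional Lindeberg condition through this two-layer (resampling $\times$ data) randomness, and extracting the constant $v_0$ sharply enough to certify $v_0\neq2/5$, is the heart of the proof.
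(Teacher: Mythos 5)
Your reduction of $\tilde{\xi}_b$ to multinomial functionals and your treatment of the uncentered claims are sound and essentially parallel the paper's: the paper likewise shows that the bootstrap statistic has asymptotic mean $e^{-1}$ (its Lemmas~\ref{lemma:exp,cond}--\ref{lemma:exp}, after reducing $\tilde{\xi}_b$ to a simpler statistic $\overline{\xi}_b$), and it uses $\xi_n=O_{\P}(n^{-1/2})$ to dispose of $n\E[(\tilde{\xi}_b-\xi_n)^2\given\mX,\mY]$ and of the law of $\sqrt{n}(\tilde{\xi}_b-\xi_n)$. One detail: the paper only proves convergence of the \emph{unconditional} mean $\E[\tilde{\xi}_b]\to e^{-1}$ and combines it with boundedness to extract an event of probability at least $\epsilon$ on which $\E[\tilde{\xi}_b\given\mX,\mY]>C_2>0$; your stronger claim that $\E[\tilde{\xi}_b\given\mX,\mY]\to e^{-1}$ in probability would itself require a second-moment/concentration argument that you have not supplied, though for these easy halves the weaker statement suffices and your argument can be repaired accordingly.

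The genuine gap is in the centered claims, which you yourself flag as the ``heart of the proof'' and then leave ungranted. Everything hinges on a quantitative fact about the bootstrap conditional variance; your proposal reduces it to ``verify $v_0\neq 2/5$'' but performs no computation that could certify this, and the Poissonization, delta-method remainder control, and conditional CLT you outline are precisely the formidable part. Moreover, your route demands strictly more than is needed: you require (a) existence of a probability limit $v_0$ of $n\Var[\tilde{\xi}_b\given\mX,\mY]$, (b) its exact value, and (c) a conditional CLT on the events $\cE_n$. The paper avoids all three. It proves only a one-sided bound on the \emph{expected} conditional variance, $\limsup_n n\E[\Var[\tilde{\xi}_b\given\mX,\mY]]\le \frac{3}{5}-\frac{8}{5}\frac{1}{e^2}\approx 0.3835<\frac{2}{5}$, by explicit multinomial moment identities (Lemma~\ref{lemma:multi}), an exact conditional-variance expansion (Lemma~\ref{lemma:var,cond}), and expectations of the cardinalities $\lvert S_{i,k}\rvert$, $\lvert S_{i,k\cap j,\ell}\rvert$, etc.\ (Lemma~\ref{lemma:card}); it then argues by contradiction: if $n\Var[\tilde{\xi}_b\given\mX,\mY]\to 2/5$ in probability, or if $\sqrt{n}(\tilde{\xi}_b-\E[\tilde{\xi}_b\given\mX,\mY])$ had conditional law tending to $N(0,2/5)$, the Portmanteau lemma (plus Fatou over the data) would force $\liminf_n n\E[\Var[\tilde{\xi}_b\given\mX,\mY]]\ge 2/5$, a contradiction. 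No bootstrap CLT, no exact limit $v_0$, and no concentration of the conditional variance are needed — even the existence of the limit is never asserted. To complete your proof you would either have to carry out your harder program in full, or replace it by this bound-plus-Portmanteau contradiction; in either case the missing ingredient is the actual variance computation, which your proposal does not supply.
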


The authors are intrigued by Theorem \ref{thm:main} and believe its significance is best appreciated in the context of mathematical statistics history, where the bootstrap method's validity has been a central topic, with establishing/disproving its consistency being particularly imperative. 

For an i.i.d. sample, bootstrap consistency is often linked to the studied statistic's root-$n$ consistency and asymptotic normality. According to \citet[Page 128]{shao2012jackknife}, the conventional wisdom seems to suggest that ``[u]sually the consistency of the bootstrap distribution estimator requires some smoothness conditions that are {\it almost the same as} those required for the asymptotic normality of the given statistic and certain moment conditions.'' As a matter of fact, this insight has been partly formalized in \citet[Theorem 1]{mammen2012does}, where Enno Mammen demonstrated, elegantly, that bootstrap consistency is equivalent to asymptotic normality when applied to linear functionals.

Indeed, the majority of theoretical results on bootstrap inconsistency are centered on statistics that do not exhibit a regular pattern of being root-$n$ consistent and asymptotically normal. In this regard, \cite{athreya1987bootstrap}, \cite{knight1989bootstrap}, and \cite{hall1990asymptotic} focused on the sample mean with a sample drawn from heavy-tailed distributions, \cite{beran1985bootstrap} on eigenvalues, \cite{hall1993inconsistency} on ranked parameters, and \cite{andrews2000inconsistency} and \cite{drton2011quantifying} on parameters at the boundary. Furthermore, \cite{abrevaya2005bootstrap}, \cite{kosorok2008bootstrapping}, and \cite{sen2010inconsistency} examined cubic-root consistent estimators, \cite{bretagnolle1983lois} and \cite{arcones1992bootstrap} investigated degenerate U- and V-statistics, and \cite{dumbgen1993nondifferentiable} and \cite{fang2019inference} explored a general class of non-smooth plug-in estimators.

For statistics that exhibit root-$n$ consistency and asymptotic normality, we categorize the cases where bootstrap inconsistency arises to three groups: (1) those that fail due to moment condition, e.g., Bickel and Freedman's U-statistics \citep[Section 6]{bickel1981some}; (2) those that fail at superefficiency points, e.g., Hodges and Stein estimators \citep{beran1997diagnosing,samworth2003note}; and (3) those that do not belong to the previous two groups, including, notably, Abadie and Imbens's nearest neighbor matching estimator of the average treatment effect \citep{abadie2008failure}. 

Abadie and Imbens's case is particularly relevant to our work on Chatterjee's rank correlation, as both can be perceived as a type of nearest neighbor graph-based statistics with a {\it fixed} number of nearest neighbors. To the best of the authors' knowledge, however, no work has established a general relationship between the inconsistency of the bootstrap and the ``irregularity'' of graph-based statistics; this would be an interesting future question for mathematical statisticians.

Finally, it should be noted that the issue of bootstrap inconsistency is not a universal problem affecting all rank correlations or rank-based statistics. For example, the bootstrap consistency of Spearman's $\rho$ and Kendall's $\tau$ can be easily established based on the works of \cite{bickel1981some} and \cite{arcones1992bootstrap}. On the other hand, the bootstrap inconsistency of Hoeffding's $D$, Blum-Kiefer-Rosenblatt's $r$, and Bergsma-Dassios-Yanagimoto's $\tau^*$ under independence between $X$ and $Y$ is caused by the non-normal convergence of the degenerate U-statistics, but not by the ranking.

\subsection{Simulations}

One might be tempted to speculate that the bootstrap inconsistency observed in Theorem \ref{thm:main} is solely due to the ``degeneracy'' property of the null point of independence. While independence does play a crucial role in the cases of Hoeffding's $D$, Blum-Kiefer-Rosenblatt's $r$, and Bergsma-Dassios-Yanagimoto's $\tau^*$ --- they only become degenerate when $X$ is independent of $Y$ --- the case of Chatterjee's rank correlation appears to be different. 

As tracking the limits of $n\E[(\tilde{\xi}_b - \xi_n)^2]$ and $n\Var[\tilde{\xi}_b \given \mX,\mY]$ under dependence between $Y$ and $X$ is technically intimidating, this paper relies on simulations to illustrate this point. To this end, we investigate the following six methods.
\begin{enumerate}[label=(\roman*)]
\item (V-LH) The asymptotic variance estimator described in \citet[Theorem 1.2]{lin2022limit};
\item (V-B1) the bootstrap asymptotic variance estimator using $n\E[(\tilde{\xi}_b - \xi_n)^2 \given \mX,\mY]$;
\item (V-B2) the bootstrap asymptotic variance estimator using $n\Var[\tilde{\xi}_b \given \mX,\mY]$;
\item (D-LH) constructing the confidence interval using the idea described in \citet[Remark 1.4]{lin2022limit};
\item (D-HB1) constructing the confidence interval using the hybrid bootstrap \citep[Section 4.1.5]{shao2012jackknife} based on $\sqrt{n}(\tilde{\xi}_b - \xi_n)\given \mX,\mY$;
\item (D-HB2) constructing the confidence interval using the hybrid bootstrap based on $\sqrt{n}(\tilde{\xi}_b - \E[\tilde{\xi}_b\given\mX,\mY]) \given \mX,\mY$.
\end{enumerate}

The simulation studies were conducted based on the Gaussian rotation model, where $(X,Y)$ are bivariate Gaussian with mean 0 and covariance matrix $\Sigma$, defined as
\[
 \Sigma = \bigg(
  \begin{matrix}
    1 &~~ \rho\\
    \rho &~~ 1
  \end{matrix} \bigg),~~~{\rm with}~\rho\in (
  -1,1).
\]

We investigate the performance of different methods for estimating $\xi_n$'s variance and inferring $\xi$ using various sample sizes $n=1,000, 5,000, 10,000$ and population correlations $\rho=0, 0.3, 0.5, 0.7, 0.9$. For the bootstrap procedure, we adopt a bootstrap size of $5,000$ and simulate $5,000$ replications to compute the square roots of the mean squared errors (RMSEs) in estimating $n\Var(\xi_n)$ (of limits 0.4,  0.46, 0.51, 0.47, and 0.24 as $\rho$ changes from 0 to 0.9), as well as the empirical coverage probabilities with the nominal level $\alpha=0.05$ or 0.1.

Table \ref{tab:sim} presents the simulation results, demonstrating that regardless of the strength of dependence characterized by $\rho$, the bootstrap methods consistently produce erroneous variance estimators and inaccurate confidence intervals. On the other hand, the method proposed by \cite{lin2022limit} performs well for large $n$.

{
\renewcommand{\tabcolsep}{1.5pt}
\renewcommand{\arraystretch}{1.1}
\begin{table}[t!p]
\aboverulesep=0ex
\belowrulesep=0ex
\centering 
\caption{Variance estimation and empirical coverage probability}{
\begin{tabular}{C{.5in}C{.5in}C{.5in}C{.5in}C{.5in}C{.5in}C{.5in}C{.5in}C{.5in}C{.5in}C{.5in}}
\toprule
\multirow{2}{*}{$\rho$} & \multirow{2}{*}{$n$}
& \multicolumn{3}{c}{Variance, RMSE} & \multicolumn{3}{c}{Coverage, $\alpha=0.05$}&  \multicolumn{3}{c}{Coverage, $\alpha=0.1$} \\
\cline{3-11}
& &V-LH & V-B1 & V-B2 & D-LH & D-HB1 & D-HB2 & D-LH & D-HB1 & D-HB2\\
\midrule
0 & 1000 & 0.18 & 135.65 & 0.09 & 0.90 &   0.00 & 0.92 & 0.85 &   0.00 & 0.86 \\ 
& 5000 & 0.08 & 676.92 & 0.09 & 0.94 &   0.00 & 0.91 & 0.89 &   0.00 & 0.85 \\ 
& 10000 & 0.06 & 1353.32 & 0.09 & 0.95 &   0.00 & 0.92 & 0.90 &   0.00 & 0.85 \\ 
\midrule
0.3 & 1000 & 0.18 & 122.19 & 0.16 & 0.90 &   0.00 & 0.89 & 0.85 &   0.00 & 0.82 \\ 
& 5000 & 0.07 & 610.14 & 0.16 & 0.94 &   0.00 & 0.88 & 0.89 &   0.00 & 0.81 \\ 
& 10000 & 0.05 & 1220.34 & 0.16 & 0.95 &   0.00 & 0.89 & 0.90 &   0.00 & 0.82 \\ 
\midrule
0.5 & 1000 & 0.17 & 98.94 & 0.23 & 0.90 &   0.00 & 0.84 & 0.84 &   0.00 & 0.76 \\ 
& 5000 & 0.07 & 495.33 & 0.24 & 0.95 &   0.00 & 0.85 & 0.89 &   0.00 & 0.77 \\ 
& 10000 & 0.05 & 990.39 & 0.24 & 0.95 &   0.00 & 0.85 & 0.90 &   0.00 & 0.77 \\ 
\midrule
0.7 & 1000 & 0.15 & 65.81 & 0.26 & 0.91 &   0.00 & 0.81 & 0.84 &   0.00 & 0.72 \\ 
& 5000 & 0.06 & 329.11 & 0.27 & 0.95 &   0.00 & 0.81 & 0.89 &   0.00 & 0.73 \\ 
& 10000 & 0.04 & 657.98 & 0.26 & 0.95 &   0.00 & 0.82 & 0.91 &   0.00 & 0.74 \\ 
\midrule
0.9 & 1000 & 0.12 & 23.81 & 0.15 & 0.82 &   0.00 & 0.78 & 0.76 &   0.00 & 0.69 \\ 
& 5000 & 0.04 & 119.01 & 0.14 & 0.93 &   0.00 & 0.78 & 0.88 &   0.00 & 0.69 \\ 
& 10000 & 0.03 & 238.42 & 0.15 & 0.94 &   0.00 & 0.77 & 0.89 &   0.00 & 0.68 \\ 
\bottomrule
\end{tabular}}
\label{tab:sim}
\end{table}
}

\section{Proof of Theorem \ref{thm:main}}

We first introduce some necessary notation. For any integers $n\ge 1$, let $\zahl{n}:= \{1,2,\ldots,n\}$. A set consisting of distinct elements $x_1,\dots,x_n$ is written as either $\{x_1,\dots,x_n\}$ or $\{x_i\}_{i=1}^{n}$, and its cardinality is written by $\lvert \{x_i\}_{i=1}^n \rvert$. The corresponding sequence is denoted by $[x_1,\dots,x_n]$ or $[x_i]_{i=1}^{n}$. For any $a,b \in \bR$, write $a \vee b := \max\{a,b\}$ and $a \wedge b := \min\{a,b\}$. For any two real sequences $\{a_n\}$ and $\{b_n\}$, write $a_n = O(b_n)$ if $\limsup |a_n/b_n|$ is bounded.

Given a sample $\{(X_i,Y_i)\}_{i=1}^n$, let $W_i=W_{b,i}$ be the number of times $(X_i,Y_i)$ appears in the bootstrap sample $(\mX_b,\mY_b)$; $(W_1,\ldots,W_n)$ then follows a multinomial distribution with parameter $(n,1/n,\ldots,1/n)$, which we denote by $M_n(n;1/n,\ldots,1/n)$. Introduce 
\[
\tR_i := \ind(W_i>0)\cdot\sum_{j=1}^n W_j \ind(Y_j \le Y_i) 
\]
to represent the rank of $Y_i$ in the bootstrap sample if $(X_i,Y_i)$ appears in the bootstrap sample; note that $\tR_i$ is different from $R_{b,i}$ as they use different indexing system. 


Recall that $\{[i],i=1,\ldots,n\}$ is defined to be the unique  rearrangement of $\{1,\ldots,n\}$ such that 
\[
X_{[1]} < \cdots < X_{[n]} 
\]
as $F_X$ has been assumed to be continuous. One could then define
\begin{align}\label{eq:hatxib}
  \hat{\xi}_b &:= 1 - \frac{3}{n^2-1} \sum_{i=1}^{n-1} \ind\Big(W_{[i]}>0\Big)\Big\lvert \tR_{[i+k(i)]} - \tR_{[i]} \Big\rvert,
\end{align}
where 
\[
k(i) := 
\begin{cases}
        \min\{k:W_{[i+k]}>0\}, & \mbox{ if the left set is nonempty},\\   
        0, & \mbox{otherwise}.
    \end{cases} 
\]

Comparing \eqref{eq:hatxib} with \eqref{eq:xib}, we have the following key identity,
\begin{align}\label{eq:key}
  \sum_{i=1}^{n-1} \ind\Big(W_{[i]}>0\Big)\Big\lvert \tR_{[i+k(i)]} - \tR_{[i]} \Big\rvert = \sum_{i=1}^{n-1} \Big\lvert R_{b,[i+1]_b} - R_{b,[i]_b} \Big\rvert,
\end{align}
where $\{[i]_b,i=1,\ldots,n\}$, introduced in \eqref{eq:ib}, is defined to be a rearrangement of $\{1,\ldots,n\}$ such that $X_{b,[1]_b} \le \cdots \le X_{b,[n]_b}$, with ties in $\{X_{b,i}\}_{i=1}^n$ broken {\it in arbitrary way}. 

Why does \eqref{eq:key} hold? The reason is that, no matter how we adjust for the ties on $\{X_{b,i}\}_{i=1}^n$, it is always true that 
\[
Y_{b,i} = Y_{b,j} \text{ whenever } X_{b,i} = X_{b,j}
\]
since they are generated by the same unit from the bootstrap procedure. Then 
\[
R_{b,[i+1]_b} - R_{b,[i]_b} \text{ is nonzero only if } X_{b,[i+1]_b} \ne X_{b,[i]_b}, 
\]
or equivalently, 
\[
X_{b,[i+1]_b} \text{ is the right nearest neighbor (NN) of }X_{b,[i]_b}
\]
in the bootstrap sample. Note that the bootstrap procedure will not change the order of $X_1,\ldots,X_n$. In order to find the right NN of a unit in the bootstrap sample, it is then equivalent to finding the right NN in the original sample among those that also appear in the bootstrap sample.

The difference between \eqref{eq:hatxib} and \eqref{eq:xib} is, then, fully due to the replacement of a random quantity $n/[2\sum_{i=1}^n L_{b,[i]_b} (n-L_{b,[i]_b})]$ by a fixed quantity $3/(n^2-1)$, corresponding to the simple form of $\xi_n$ in \eqref{eq:barxin}. The following lemma shows that this difference is $\sqrt{n}$-negaliable. All lemmas stated in the sequel are under the conditions of Theorem~\ref{thm:main}.

\begin{lemma}\label{lemma:l2} It holds true that
  \begin{align}
    \lim_{n \to \infty} n\E[\hat{\xi}_b - \tilde{\xi}_b]^2 = 0.
  \end{align}
\end{lemma}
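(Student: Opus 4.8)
The plan is to exploit the identity \eqref{eq:key}, which forces $\hat{\xi}_b$ and $\tilde{\xi}_b$ to share a common numerator. Set $S:=\sum_{i=1}^{n-1}\big\lvert R_{b,[i+1]_b}-R_{b,[i]_b}\big\rvert$ and $D:=\sum_{i=1}^n L_{b,i}(n-L_{b,i})$, and let $D_0:=n(n^2-1)/6$ be the value that $D$ takes when the sample is tie-free (so that $3/(n^2-1)=n/(2D_0)$). Combining \eqref{eq:key} with \eqref{eq:hatxib} and \eqref{eq:xib} gives the exact relations $1-\hat{\xi}_b=nS/(2D_0)$ and $1-\tilde{\xi}_b=nS/(2D)$, whence
\[
\hat{\xi}_b-\tilde{\xi}_b=(1-\hat{\xi}_b)\,\frac{D_0-D}{D}.
\]
Since $0\le S\le (n-1)^2$ trivially, we have the deterministic bound $0\le 1-\hat{\xi}_b\le 3$, so that $n\E[(\hat{\xi}_b-\tilde{\xi}_b)^2]\le 9\,n\,\E\big[(D_0-D)^2D^{-2}\big]$. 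The lemma thus reduces to showing that the random normalizer $D$ concentrates around its tie-free value $D_0$ fast enough; this is where all the work lies.

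Next I would obtain an explicit handle on $D$. Reindexing the original units by increasing $Y$-value (possible since $F_Y$ is continuous), the reversed rank $L_{b,i}$ depends only on the unit that bootstrap index $i$ copies, and grouping the $W_a$ copies of each unit $a$ yields the cubic form
\[
D=\sum_{c<a\le b}W_aW_bW_c=\sum_{c<a<b}W_aW_bW_c+\sum_{c<a}W_a^2W_c,
\]
where $(W_1,\dots,W_n)\sim M_n(n;1/n,\dots,1/n)$. Using the multinomial factorial-moment identity $\E[(W_{i_1})_{r_1}\cdots(W_{i_m})_{r_m}]=(n)_{r_1+\cdots+r_m}/n^{\,r_1+\cdots+r_m}$ for distinct $i_1,\dots,i_m$, a direct computation gives $\E[D]=D_0+O(n)$, so the mean shift is negligible at the required scale.

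The main obstacle is the variance bound $\Var(D)=O(n^3)$. I would write $W_a=1+\epsilon_a$ with $\sum_a\epsilon_a=0$ (the exact multinomial constraint) and expand the symmetric degree-three sum into its constant, linear, quadratic, and cubic parts in the $\epsilon_a$. The crucial point is that the linear part of the dominant piece $\sum_{c<a<b}W_aW_bW_c$ equals $\binom{n-1}{2}\sum_a\epsilon_a$, which vanishes identically by the constraint; were it present, its variance would be of order $n^5$ and the lemma would be \emph{false}. What survives is a quadratic term proportional to $(n-2)\sum_a\epsilon_a^2$, a genuine degree-three $U$-statistic in the $\epsilon_a$, and a position-weighted linear term $\sum_a a\,\epsilon_a$ coming from the lower-order piece $\sum_{c<a}W_a^2W_c$; each of these has variance $O(n^3)$, using $\Var\big(\sum_a\epsilon_a^2\big)=O(n)$ and $\Var\big(\sum_a a\,\epsilon_a\big)=\sum_a a^2-n^{-1}\big(\sum_a a\big)^2=O(n^3)$. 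Collecting terms gives $\E[(D_0-D)^2]=\Var(D)+(\E[D]-D_0)^2=O(n^3)$. Tracking carefully the covariances induced by the single linear constraint, so as to confirm these order estimates, is the technical heart of the argument.

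Finally I would assemble the bound while controlling the random denominator $D$. On the event $A:=\{D\ge D_0/2\}$ one has $(D_0-D)^2D^{-2}\le 4(D_0-D)^2D_0^{-2}$, so
\[
9\,n\,\E\big[(D_0-D)^2D^{-2}\ind(A)\big]\le 36\,n\,\E[(D_0-D)^2]/D_0^2=O\!\big(n\cdot n^3/n^6\big)\to0.
\]
On $A^c$ I would abandon the ratio form and instead use the crude deterministic bound $(\hat{\xi}_b-\tilde{\xi}_b)^2\le Cn^6$, valid whenever $D\ge1$ via $S\le(n-1)^2$ (on the negligible event $D=0$ the bootstrap $Y$-values are constant, $S=0$, and the difference is set to $0$). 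Since changing a single bootstrap draw alters $D$ by at most $O(n^2)$, the bounded-difference (McDiarmid) inequality gives $\P(A^c)\le 2e^{-cn}$, whence $n\,\E\big[(\hat{\xi}_b-\tilde{\xi}_b)^2\ind(A^c)\big]\le Cn^7e^{-cn}\to0$. Adding the two contributions yields $n\E[(\hat{\xi}_b-\tilde{\xi}_b)^2]\to0$, as claimed.
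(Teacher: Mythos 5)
Your proposal is correct, and it shares the paper's skeleton: both arguments observe that $\hat{\xi}_b$ and $\tilde{\xi}_b$ have identical numerators (via \eqref{eq:key}) and differ only through the normalizer, so everything reduces to showing that $D=\sum_{i=1}^n W_i\tL_i(n-\tL_i)$ concentrates around its tie-free value $D_0=n(n^2-1)/6$ with $\E[(D-D_0)^2]=O(n^3)$. The execution, however, differs at both technical steps. For the variance bound, the paper invokes the Efron--Stein inequality (Lemma \ref{lemma:l2var}: swapping a single bootstrap draw perturbs $D$ by a quantity whose second moment is $O(n^2)$, giving $\Var[D\given\mX,\mY]\lesssim n^3$), whereas you expand $W_a=1+\epsilon_a$ under the constraint $\sum_a\epsilon_a=0$ and exhibit the exact cancellation of the would-be $O(n^5)$-variance linear term $\binom{n-1}{2}\sum_a\epsilon_a$; your route demands more multinomial moment bookkeeping (which you defer, and which also needs the quadratic part $\sum_a(a-2)\epsilon_a^2$ of the lower-order piece, likewise of variance $O(n^3)$), but the orders you assert are all correct, and the cancellation makes visible the structural reason the lemma holds, which the Efron--Stein computation sees only implicitly. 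For converting concentration into the $L^2$ statement, the paper uses Chebyshev with an optimized threshold $\epsilon\asymp n^{9/4}$ together with the uniform bound $\lvert\tilde{\xi}_b-\hat{\xi}_b\rvert\lesssim 1$ (which tacitly relies on deterministic boundedness of the tie-adjusted statistic), while you split on $\{D\ge D_0/2\}$, use the clean exact identity $\hat{\xi}_b-\tilde{\xi}_b=(1-\hat{\xi}_b)(D_0-D)/D$ on the good event, and dispose of the bad event with McDiarmid's exponential bound plus a crude polynomial bound; this avoids any appeal to boundedness of $\tilde{\xi}_b$ and in fact yields the sharper rate $n\E[(\hat{\xi}_b-\tilde{\xi}_b)^2]=O(n^{-2})$, compared with the paper's $O(n^{-1/2})$.
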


With lemma \ref{lemma:l2}, it then suffices to consider the asymptotic behavior of $\hat{\xi}_b$. However, \eqref{eq:hatxib} cannot be further simplified; for instance, one cannot replace $\tR_i$'s by $R_i$'s. This places more difficulty to analyzing \eqref{eq:hatxib}. Our route to proving Theorem \ref{thm:main} is instead built on a seemingly more complicated reformulation of $\hat\xi_b$:
\begin{align*}
  \overline{\xi}_b = 1 - \frac{3}{n^2-1} \sum_{i=1}^{n-1} \ind\Big(W_{[i]}>0\Big)\sum_{k=1}^{n-i}\Big\{\Big[ \sum_{j=1}^{n} W_j \ind\big(Y_{[i]} \wedge Y_{[i+k]} < Y_j < Y_{[i]} \vee Y_{[i+k]} \big) \Big] \\
  \ind\Big(W_{[i+\ell]}=0,\forall \ell \in \zahl{k-1}\Big) \ind\Big(W_{[i+k]}>0\Big)\Big\}.
\end{align*}.

The following lemma establishes the asymptotic equivalence between $\hat\xi_b$ and $\overline{\xi}_b$.


\begin{lemma}\label{lemma:han-l1} We have 
\[
\lvert \overline{\xi}_b - \hat{\xi}_b \rvert\leq \frac{6n}{n^2-1}=O(n^{-1}).
\]
\end{lemma}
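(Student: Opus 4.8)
The plan is to compare $\overline{\xi}_b$ and $\hat{\xi}_b$ term by term along the $X$-ordering and to show that their difference collapses to a single, easily bounded, combinatorial sum. First I would observe that in the inner sum defining $\overline{\xi}_b$, the product of indicators $\ind(W_{[i+\ell]}=0,\forall \ell \in \zahl{k-1})\ind(W_{[i+k]}>0)$ selects exactly one value of $k$, namely $k=k(i)$, the gap to the next bootstrap-present unit. Hence, for every $i$ with $W_{[i]}>0$ and $k(i)\ge 1$, the inner sum reduces to $\sum_{j=1}^n W_j \ind(Y_{[i]}\wedge Y_{[i+k(i)]} < Y_j < Y_{[i]}\vee Y_{[i+k(i)]})$, the $W$-weighted count of sample points whose $Y$-value lies strictly between $Y_{[i]}$ and $Y_{[i+k(i)]}$, which I will call the open count at $i$. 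When $k(i)=0$ both statistics contribute zero to the corresponding summand, so these boundary terms may be ignored.

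The key computation is to relate this open count to the rank difference $\lvert\tR_{[i+k(i)]}-\tR_{[i]}\rvert$ appearing in $\hat{\xi}_b$. Using $\tR_m=\sum_{\ell}W_\ell\ind(Y_\ell\le Y_m)$ for a present unit $m$, I would write $\tR_{[i+k(i)]}-\tR_{[i]}=\sum_\ell W_\ell[\ind(Y_\ell\le Y_{[i+k(i)]})-\ind(Y_\ell\le Y_{[i]})]$ and recognize this as a $W$-weighted count over the \emph{half-open} band between $Y_{[i]}$ and $Y_{[i+k(i)]}$. Because $F_{X,Y}$ is continuous, all $Y$-values are distinct, so the half-open band differs from the open band by exactly the single endpoint carrying the larger $Y$-value. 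This gives the exact identity $\lvert\tR_{[i+k(i)]}-\tR_{[i]}\rvert = (\text{open count})_i + W_{[m^\star(i)]}$, where $[m^\star(i)]\in\{[i],[i+k(i)]\}$ is the endpoint with the larger $Y$. Subtracting, the difference between $\overline{\xi}_b$ and $\hat{\xi}_b$ equals $\frac{3}{n^2-1}$ times $\sum_i W_{[m^\star(i)]}$ over all valid $i$.

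It then remains to bound $\sum_i W_{[m^\star(i)]}$. The valid indices $i$ are in bijection with the consecutive pairs of bootstrap-present units in the $X$-ordering, and each such pair contributes the weight of its larger-$Y$ endpoint. Since each present unit is an endpoint of at most two consecutive pairs (its neighbor to the left and to the right among present units), each weight $W_j$ is counted at most twice, whence $\sum_i W_{[m^\star(i)]}\le 2\sum_{j=1}^n W_j = 2n$, using that the multinomial weights sum to $n$. Combining the pieces, $\lvert\overline{\xi}_b-\hat{\xi}_b\rvert\le \frac{3}{n^2-1}\cdot 2n = \frac{6n}{n^2-1}=O(n^{-1})$, as claimed.

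I expect the only delicate point to be the endpoint accounting in the second step, namely correctly attributing the half-open/open discrepancy to the larger-$Y$ endpoint and verifying it is the right, respectively left, unit according to the sign of $Y_{[i+k(i)]}-Y_{[i]}$; everything else is routine bookkeeping together with the one-line counting bound.
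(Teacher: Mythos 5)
Your proof is correct and follows essentially the same route as the paper's: both rewrite the rank difference $\lvert\tR_{[i+k(i)]}-\tR_{[i]}\rvert$ as a $W$-weighted count over the half-open $Y$-band, identify the discrepancy with $\overline{\xi}_b$ as the endpoint weight (the paper bounds it by $W_{[i]}+W_{[i+k(i)]}$ where you attribute it exactly to the larger-$Y$ endpoint, an inessential refinement), and conclude with the same counting argument that each bootstrap-present unit belongs to at most two consecutive present pairs, giving $\lvert\overline{\xi}_b-\hat{\xi}_b\rvert\le \frac{3}{n^2-1}\cdot 2n=\frac{6n}{n^2-1}$.
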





With Lemma \ref{lemma:han-l1}, it then suffices to consider $\overline{\xi}_b$. Conditional on $\mX,\mY$, without loss of generality, we can assume $[X_i]_{i=1}^n$ is strictly increasing. One could then simply write
\begin{align*}
  \overline{\xi}_b = 1 - \frac{3}{n^2-1} \sum_{i=1}^{n-1} \ind\Big(W_i>0\Big)\sum_{k=1}^{n-i}\Big\{\Big[ \sum_{j=1}^{n} W_j \ind\big(Y_i \wedge Y_{i+k} < Y_j < Y_i \vee Y_{i+k} \big) \Big] \\
  \ind\Big(W_{i+\ell}=0,\forall \ell \in \zahl{k-1}\Big) \ind\Big(W_{i+k}>0\Big)\Big\}.
\end{align*}
For any $i \in \zahl{n-1}$ and $k \in \zahl{n-i}$, let
\begin{align*}
  S_{i,k} := \Big\{j \in \zahl{n}: Y_i \wedge Y_{i+k} < Y_j < Y_i \vee Y_{i+k} \Big\} \setminus \Big\{j\in\zahl{n}:i<j<i+k\Big\}.
\end{align*}
Then by the event $\{W_{i+\ell}=0,\forall \ell \in \zahl{k-1}\}$, we have
\begin{align*}
  \overline{\xi}_b =& 1 - \frac{3}{n^2-1} \sum_{i=1}^{n-1} \ind\Big(W_i>0\Big)\sum_{k=1}^{n-i}\Big[ \sum_{j\in S_{i,k}} W_j  \Big]  \ind\Big(W_{i+\ell}=0,\forall \ell \in \zahl{k-1}\Big) \ind\Big(W_{i+k}>0\Big).
\end{align*}

To continue the proof, we introduce the following lemma on some properties of the multinomial distribution.

\begin{lemma}\label{lemma:multi}
  Recall that $(W_1,\ldots,W_n) \sim M_n(n;1/n,\ldots,1/n)$. Let $\cS,\cS',\cT \subset \zahl{n}$ be mutually disjoint. Then
  \begin{align*}
    & \E \Big[\Big(\sum_{s\in\cS} W_s\Big) \ind\Big(\sum_{t \in \cT} W_t = 0\Big)\Big] = \lvert \cS \rvert \Big(1-\frac{\lvert \cT \rvert}{n}\Big)^{n-1};\\
    & \E \Big[\Big(\sum_{s\in\cS} W_s\Big)^2 \ind\Big(\sum_{t \in \cT} W_t = 0\Big)\Big] = \lvert \cS \rvert \Big(1-\frac{\lvert \cT \rvert}{n}\Big)^{n-1} + \lvert \cS \rvert^2 \Big(1-\frac{1}{n}\Big) \Big(1-\frac{\lvert \cT \rvert}{n}\Big)^{n-2};\\
  {\rm and}~~~  & \E \Big[\Big(\sum_{s\in\cS} W_s\Big) \Big(\sum_{s\in\cS'} W_s\Big) \ind\Big(\sum_{t \in \cT} W_t = 0\Big)\Big] = \lvert \cS \rvert \lvert \cS' \rvert \Big(1-\frac{1}{n}\Big) \Big(1-\frac{\lvert \cT \rvert}{n}\Big)^{n-2}.
  \end{align*}
\end{lemma}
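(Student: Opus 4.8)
The plan is to prove all three identities simultaneously via the standard ``balls into boxes'' coupling for the multinomial law. I would realize $(W_1,\dots,W_n)\sim M_n(n;1/n,\dots,1/n)$ by throwing $n$ balls independently and uniformly into $n$ boxes: let $B_1,\dots,B_n$ be i.i.d.\ uniform on $\zahl{n}$ and set $W_i=\sum_{m=1}^n\ind(B_m=i)$. The point of this representation is that both building blocks appearing in the lemma become sums or products over the \emph{independent} variables $B_m$. Indeed, for any $\cA\subset\zahl{n}$ one has $\sum_{s\in\cA}W_s=\sum_{m=1}^n\ind(B_m\in\cA)$, while the event $\{\sum_{t\in\cT}W_t=0\}$ is exactly $\{B_m\notin\cT\text{ for all }m\}$, so $\ind(\sum_{t\in\cT}W_t=0)=\prod_{m=1}^n\ind(B_m\notin\cT)$.

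First I would expand each left-hand side by multiplying the count(s) against this product of indicators and apply linearity of expectation, reducing everything to expectations of the form $\E[\ind(B_m\in\cS)\,\ind(B_{m'}\in\cS')\prod_{m''}\ind(B_{m''}\notin\cT)]$ (with $\cS'=\cS$ in the squared case, and without the second count factor in the first identity). The crucial simplification is disjointness: since $\cS,\cS',\cT$ are mutually disjoint, whenever $B_m\in\cS$ we automatically have $B_m\notin\cT$, so the factor $\ind(B_m\notin\cT)$ is redundant and may be deleted from the product. Once the redundant factors are removed, the remaining indicators involve distinct balls, and independence of the $B_m$ lets the expectation factor into a product of marginal probabilities, each equal to $\lvert\cS\rvert/n$, $\lvert\cS'\rvert/n$, or $1-\lvert\cT\rvert/n$.

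The only real bookkeeping, and the one place to be careful, is the diagonal/off-diagonal split in the quadratic identities and the resulting exponent on $1-\lvert\cT\rvert/n$. For the first identity there is a single sum over $m$; the $n$ identical terms each equal $(\lvert\cS\rvert/n)(1-\lvert\cT\rvert/n)^{n-1}$ — one ball is ``used'' on $\cS$, the remaining $n-1$ must avoid $\cT$ — giving $\lvert\cS\rvert(1-\lvert\cT\rvert/n)^{n-1}$. For the squared identity I would split $\sum_{m,m'}$ into the diagonal $m=m'$, where $\ind(B_m\in\cS)^2=\ind(B_m\in\cS)$ reproduces the first identity and contributes $\lvert\cS\rvert(1-\lvert\cT\rvert/n)^{n-1}$, and the off-diagonal $m\ne m'$, where two distinct balls land in $\cS$ and the remaining $n-2$ avoid $\cT$: there are $n(n-1)$ ordered pairs, each contributing $(\lvert\cS\rvert/n)^2(1-\lvert\cT\rvert/n)^{n-2}$, and $n(n-1)/n^2=1-1/n$ yields the stated $\lvert\cS\rvert^2(1-1/n)(1-\lvert\cT\rvert/n)^{n-2}$. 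The third identity is the off-diagonal computation with $\cS'$ replacing the second copy of $\cS$; here disjointness of $\cS$ and $\cS'$ kills the diagonal entirely (a single ball cannot lie in both), leaving only the $n(n-1)$ off-diagonal pairs and hence $\lvert\cS\rvert\lvert\cS'\rvert(1-1/n)(1-\lvert\cT\rvert/n)^{n-2}$. No conceptual obstacle arises: the whole lemma is a disjointness-plus-independence factorization, and the only thing to watch is keeping the exponents $n-1$ versus $n-2$ straight.
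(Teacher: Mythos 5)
Your proof is correct, but it takes a genuinely different route from the paper. The paper works with conditional distributions of the multinomial: it uses the facts that $\sum_{t\in\cT}W_t\sim{\rm Bin}(n,\lvert\cT\rvert/n)$, that conditionally on $\sum_{t\in\cT}W_t$ the sum $\sum_{s\in\cS}W_s$ is ${\rm Bin}\big(n-\sum_{t\in\cT}W_t,\tfrac{\lvert\cS\rvert/n}{1-\lvert\cT\rvert/n}\big)$, and (for the cross term) that given $\sum_{s\in\cS\cup\cS'}W_s$ the pair $\big(\sum_{s\in\cS}W_s,\sum_{s\in\cS'}W_s\big)$ splits multinomially; each identity is then a conditional first or second moment multiplied by $\P\big(\sum_{t\in\cT}W_t=0\big)=(1-\lvert\cT\rvert/n)^n$. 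Your balls-into-boxes representation instead reduces everything to independence of the i.i.d.\ ball locations $B_m$: disjointness deletes the redundant avoidance indicators, and the diagonal/off-diagonal split produces the exponents $n-1$ and $n-2$ combinatorially. What your approach buys is self-containedness and uniformity — one elementary mechanism handles all three identities without invoking any conditional-distribution facts about the multinomial, and the meaning of each factor is transparent (one or two balls ``used'' on $\cS$, the rest avoiding $\cT$). What the paper's approach buys is brevity on the page, since it leans on standard aggregation/conditioning properties of the multinomial and on binomial moment formulas rather than re-deriving them; your coupling is, in effect, the proof one would give of those standard properties anyway. Your exponent bookkeeping and the observation that disjointness of $\cS$ and $\cS'$ annihilates the diagonal in the third identity are both handled correctly, so there is no gap.
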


Note that $S_{i,k}$ is disjoint with $\{i,i+1,\ldots,i+k\}$ almost surely since $[Y_i]_{i=1}^n$ are distinct with probability one. Then we can use Lemma~\ref{lemma:multi} to establish the following lemma on the expectation of $\overline{\xi}_b$ conditional on the original sample.

\begin{lemma}\label{lemma:exp,cond} We have
  \begin{align*}
    \E[\overline{\xi}_b \given \mX,\mY] = 1 - \frac{3}{n^2-1} \sum_{i=1}^{n-1} \sum_{k=1}^{n-i} \lvert S_{i,k} \rvert \Big[ \Big(1-\frac{k-1}{n}\Big)^{n-1} - 2 \Big(1-\frac{k}{n}\Big)^{n-1} + \Big(1-\frac{k+1}{n}\Big)^{n-1} \Big].
  \end{align*}
\end{lemma}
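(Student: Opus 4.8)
The plan is to compute the conditional expectation term-by-term from the simplified expression for $\overline{\xi}_b$, using the moment identities of Lemma~\ref{lemma:multi}. Conditionally on $\mX,\mY$ the only randomness left is $(W_1,\ldots,W_n)\sim M_n(n;1/n,\ldots,1/n)$, so by linearity it suffices to evaluate, for each fixed pair $(i,k)$,
\[
\E\Big[\ind(W_i>0)\Big(\sum_{j\in S_{i,k}}W_j\Big)\ind\big(W_{i+\ell}=0,\forall\ell\in\zahl{k-1}\big)\ind(W_{i+k}>0)\given\mX,\mY\Big],
\]
and then sum over $i\in\zahl{n-1}$ and $k\in\zahl{n-i}$.

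The key device is to linearize the two ``strictly positive'' indicators via $\ind(W>0)=1-\ind(W=0)$. Writing $\cT_0:=\{i+1,\ldots,i+k-1\}$, so that $\ind(W_{i+\ell}=0,\forall\ell\in\zahl{k-1})=\ind(\sum_{t\in\cT_0}W_t=0)$ with $|\cT_0|=k-1$, the product $\ind(W_i>0)\ind(W_{i+k}>0)$ expands into four terms, each merging with the forced-zero indicator into a single event $\{\sum_{t\in\cT}W_t=0\}$:
\begin{align*}
\cT=\cT_0 &\quad (|\cT|=k-1),\\
\cT=\cT_0\cup\{i\}\ \text{or}\ \cT_0\cup\{i+k\} &\quad (|\cT|=k),\\
\cT=\cT_0\cup\{i,i+k\} &\quad (|\cT|=k+1),
\end{align*}
carrying signs $+,-,-,+$ respectively.

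Now recall that $S_{i,k}$ is disjoint from $\{i,i+1,\ldots,i+k\}$ almost surely, since $[Y_i]_{i=1}^n$ are distinct with probability one; hence in each of the four terms the set $\cS=S_{i,k}$ is disjoint from $\cT$, and the first identity of Lemma~\ref{lemma:multi} applies verbatim, giving $\E[(\sum_{s\in\cS}W_s)\ind(\sum_{t\in\cT}W_t=0)]=|S_{i,k}|(1-|\cT|/n)^{n-1}$. Substituting the four cardinalities $k-1,k,k,k+1$ with the signs above yields
\[
|S_{i,k}|\Big[\big(1-\tfrac{k-1}{n}\big)^{n-1}-2\big(1-\tfrac{k}{n}\big)^{n-1}+\big(1-\tfrac{k+1}{n}\big)^{n-1}\Big],
\]
and summing over $(i,k)$, multiplying by $3/(n^2-1)$, and subtracting from $1$ produces the claimed formula.

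There is no serious obstacle here: the computation is a direct application of Lemma~\ref{lemma:multi} once the positive-indicator expansion is performed. The only point requiring care is the bookkeeping of the inclusion--exclusion --- in particular verifying that all four resulting zero-sets $\cT$ remain disjoint from $\cS=S_{i,k}$, so that the lemma is invoked under valid hypotheses --- together with the observation that only the first (linear) identity of Lemma~\ref{lemma:multi}, and none of the quadratic ones, is needed for this first-moment computation.
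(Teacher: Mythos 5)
Your proposal is correct and follows essentially the same route as the paper's own proof: expand $\ind(W_i>0)\ind(W_{i+k}>0)$ via $\ind(W>0)=1-\ind(W=0)$ into four signed zero-set events of cardinalities $k-1,k,k,k+1$, note that $S_{i,k}$ is almost surely disjoint from $\{i,\ldots,i+k\}$, and apply the first (linear) identity of Lemma~\ref{lemma:multi} to each term before summing over $(i,k)$. No gaps; the bookkeeping and the disjointness justification you flag are exactly the points the paper's proof rests on.
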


For any $j \in \zahl{n} \setminus \{i,i+1,\ldots,i+k\}$, by independence of $X$ and $Y$, 
\[
\P(Y_i \wedge Y_{i+k} < Y_j < Y_i \vee Y_{i+k}) = 1/3, 
\]
and thus 
\[
\E[\lvert S_{i,k} \rvert] = (n-k-1)/3. 
\]
Some calculations then yield the following lemma on the expectation of $\overline{\xi}_b$.

\begin{lemma}\label{lemma:exp} We have
  \begin{align*} 
    \E[\overline{\xi}_b] = \frac{1}{e} + O\Big(\frac{1}{n}\Big).
  \end{align*}
\end{lemma}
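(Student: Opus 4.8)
The starting point is Lemma~\ref{lemma:exp,cond}, which already expresses $\E[\overline{\xi}_b \given \mX,\mY]$ as a double sum involving the factors $|S_{i,k}|$ and the bracketed combination $(1-\frac{k-1}{n})^{n-1} - 2(1-\frac{k}{n})^{n-1} + (1-\frac{k+1}{n})^{n-1}$. The plan is to take a further expectation over the original sample, substitute the already-computed value $\E[|S_{i,k}|] = (n-k-1)/3$, and then reduce the resulting deterministic double sum to a one-dimensional sum whose asymptotics can be read off directly. After inserting $\E[|S_{i,k}|]$ the factor $3$ cancels against the prefactor $3/(n^2-1)$, and I would be left with
\[
\E[\overline{\xi}_b] = 1 - \frac{1}{n^2-1}\sum_{i=1}^{n-1}\sum_{k=1}^{n-i}(n-k-1)\Big[\Big(1-\tfrac{k-1}{n}\Big)^{n-1} - 2\Big(1-\tfrac{k}{n}\Big)^{n-1} + \Big(1-\tfrac{k+1}{n}\Big)^{n-1}\Big].
\]

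Since the summand no longer depends on $i$, the first step is to swap the order of summation, which collapses the inner sum into a factor $(n-k)$. Writing $f(k) := (1-k/n)^{n-1}$ and $c_k := (n-k)(n-k-1)$, the expression becomes $\E[\overline{\xi}_b] = 1 - (n^2-1)^{-1}\sum_{k=1}^{n-1} c_k\,[f(k-1) - 2f(k) + f(k+1)]$, in which the bracket is a discrete second difference of $f$. The next step is to perform two rounds of summation by parts --- equivalently, to reindex the three shifted copies of the sum and collect the coefficient of each $f(j)$ --- thereby transferring the second difference from $f$ onto $c_k$. The key simplification I would exploit is that $c_k$ is quadratic in $k$, so its second difference is the constant $\Delta^2 c \equiv 2$; the interior of the sum then contributes $2\sum_j f(j)$, and all remaining contributions are boundary terms at $j \in \{0,1,n-1\}$ with coefficients built from $c_1 = (n-1)(n-2)$, $c_2$, and $c_{n-2}=2$.

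Collecting these, I expect to arrive at the identity $\sum_{k=1}^{n-1} c_k[f(k-1)-2f(k)+f(k+1)] = (n-1)(n-2)\,f(0) - (n-2)(n+1)\,f(1) + 2\sum_{j=2}^{n-1} f(j)$, using $f(0)=1$. Dividing by $n^2-1=(n-1)(n+1)$ and examining each piece then finishes the argument: the first term gives $\frac{n-2}{n+1}=1+O(1/n)$; the second gives $\frac{n-2}{n-1}f(1)$, and since $f(1)=(1-1/n)^{n-1}=e^{-1}+O(1/n)$ by the standard estimate this piece equals $e^{-1}+O(1/n)$; and the tail is $O(1)$ because $f(j)=(1-j/n)^{n-1}\le e\,e^{-j}$ is geometrically dominated, so after division the last term is $O(n^{-2})$. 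Assembling the three pieces gives $(n^2-1)^{-1}\sum_k c_k[\cdots] = 1-e^{-1}+O(1/n)$, whence $\E[\overline{\xi}_b]=e^{-1}+O(1/n)=1/e+O(1/n)$.

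The individual steps are elementary, so the hard part will be the bookkeeping rather than any single estimate. In particular I would be careful that the coefficients of $f(0)$, $f(1)$ and $f(n-1)$ do not obey the interior pattern $\Delta^2 c = 2$ and must be computed separately, and that the geometric bound $f(j)\le e\,e^{-j}$ really does render the interior tail negligible once divided by $n^2-1$. Since only the $O(1/n)$ rate is claimed, the crude approximation $f(1)=e^{-1}+O(1/n)$ suffices and I would avoid chasing the finer $1/(2n)$ corrections to $(1-1/n)^{n-1}$ unless a sharper remainder were ever needed.
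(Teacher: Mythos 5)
Your proposal is correct, and the key identity you assert,
\begin{align*}
\sum_{k=1}^{n-1}(n-k)(n-k-1)\big[f(k-1)-2f(k)+f(k+1)\big]
= (n-1)(n-2)f(0)-(n-2)(n+1)f(1)+2\sum_{j=2}^{n-1}f(j),
\end{align*}
checks out exactly: reindexing the three shifted sums gives coefficient $c_1=(n-1)(n-2)$ at $f(0)$, $c_2-2c_1=-(n-2)(n+1)$ at $f(1)$, the constant second difference $2$ for $2\le j\le n-2$, coefficient $c_{n-2}-2c_{n-1}=2$ at $f(n-1)$, and coefficient $c_{n-1}=0$ at $f(n)$, so the boundary bookkeeping you flagged as the delicate part is right. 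Your route differs from the paper's in organization: the paper keeps the double sum over $(i,k)$, splits $n-k-1=(n-1)-k$, and telescopes the second difference in $k$ for each fixed $i$, which produces terms like $(i/n)^{n-1}$ and ultimately requires the limit $\sum_{i=1}^{n}i^{n}/n^{n}\to e/(e-1)$ together with the expansion $(1-1/n)^n=e^{-1}-(2e)^{-1}n^{-1}+O(n^{-2})$; you instead swap the summation order first, collapse to a single sum with quadratic weight $c_k=(n-k)(n-k-1)$, and transfer the second difference onto $c_k$ by two Abel summations, after which only the crude geometric bound $f(j)\le e\,e^{-j}$ is needed to kill the tail. What your approach buys is a more self-contained argument at the $O(1/n)$ level (no limit of $\sum i^{n-1}/n^{n-1}$ is needed, since that sum appears only divided by $n^2-1$); what the paper's computation buys is the explicit next-order coefficient $(3-\tfrac{1}{2e})/n$, which your deliberately cruder estimates do not recover, though it is not needed for the lemma as stated.
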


The variance of $\overline{\xi}_b$ conditional on the original sample can be established in a similar way by using Lemma~\ref{lemma:multi}. For any $i,j\in\zahl{n-1}$, $k\in\zahl{n-i}$ and $\ell \in \zahl{n-j}$, we write
\begin{align*}
  S_{i,k \cap j,\ell } =& \Big\{t\in\zahl{n}: (Y_i \wedge Y_{i+k}) \vee (Y_j \wedge Y_{j+\ell}) < Y_t < (Y_i \vee Y_{i+k}) \wedge (Y_j \vee Y_{j+\ell}) \Big\} \\
  & \setminus \Big\{t\in\zahl{n}:i<t<i+k,j<t<j+\ell\Big\};\\
  S_{i,k \setminus j,\ell } =& \Big\{t\in\zahl{n}: Y_i \wedge Y_{i+k} < Y_t < Y_i \vee Y_{i+k}, Y_t > Y_j \vee Y_{j+\ell} ~{\rm or}~ Y_t < Y_j \wedge Y_{j+\ell}\Big\} \\
  & \setminus \Big\{t\in\zahl{n}:i<t<i+k,j<t<j+\ell\Big\}.
\end{align*}

For any four integers $n,m,k,\ell$, define
\begin{align*}
   a_{n,m,k,\ell} :=& \Big(1-\frac{k+\ell-2}{n}\Big)^{m} - 3\Big(1-\frac{k+\ell-1}{n}\Big)^{m} + 3\Big(1-\frac{k+\ell}{n}\Big)^{m} - \Big(1-\frac{k+\ell+1}{n}\Big)^{m};\\
   b_{n,m,k,\ell} := &\Big(1-\frac{k+\ell-2}{n}\Big)^{m} - 4\Big(1-\frac{k+\ell-1}{n}\Big)^{m} + 6\Big(1-\frac{k+\ell}{n}\Big)^{m} - 4\Big(1-\frac{k+\ell+1}{n}\Big)^{m}\\
  & + \Big(1-\frac{k+\ell+2}{n}\Big)^{m};\\
   c_{n,m,k} :=& \Big(1-\frac{k-1}{n}\Big)^{m} - 2\Big(1-\frac{k}{n}\Big)^{m} + \Big(1-\frac{k+1}{n}\Big)^{m}.
\end{align*}
We can then use Lemma~\ref{lemma:multi} to establish the following result on the variance of $\overline{\xi}_b$ conditional on the original sample.
\begin{lemma}\label{lemma:var,cond}
  \begin{align*}
    & \Var[\overline{\xi}_b \given \mX,\mY]\\
    =& \frac{9}{(n^2-1)^2} \Big[\sum_{i=1}^{n-1} \sum_{k=1}^{n-i} \Big(\lvert S_{i,k} \rvert c_{n,n-1,k} + \lvert S_{i,k} \rvert^2 c_{n,n-2,k}(1-1/n)\Big) \\
    & + 2 \sum_{i,j=1,i<j}^{n-1} \sum_{k=1}^{n-i} \sum_{\ell=1}^{n-j} \Big(\lvert S_{i,k \cap j,\ell} \rvert [a_{n,n-1,k,\ell}\ind(k=j-i) + b_{n,n-1,k,\ell}\ind(k<j-i)] \\
    & + (\lvert S_{i,k \cap j,\ell} \rvert + \lvert S_{i,k \setminus j,\ell} \rvert)(\lvert S_{i,k \cap j,\ell} \rvert + \lvert S_{j,\ell \setminus i,k} \rvert) [a_{n,n-2,k,\ell}\ind(k=j-i) + b_{n,n-2,k,\ell}\ind(k<j-i)] (1-1/n) \\
    &+ [\ind(j \in S_{i,k}) + \ind(j+\ell \in S_{i,k})](\lvert S_{i,k \cap j,\ell} \rvert + \lvert S_{j,\ell \setminus i,k} \rvert) a_{n,n-2,k,\ell}\ind(k<j-i) (1-1/n)\\
    &+ [\ind(i \in S_{j,\ell}) + \ind(i+k \in S_{j,\ell})](\lvert S_{i,k \cap j,\ell} \rvert + \lvert S_{i,k \setminus j,\ell} \rvert) a_{n,n-2,k,\ell}\ind(k<j-i) (1-1/n)\Big)\\
    & -  \Big(\sum_{i=1}^{n-1} \sum_{k=1}^{n-i} \lvert S_{i,k} \rvert c_{n,n-1,k} \Big)^2\Big] + O\Big(\frac{1}{n^2}\Big).
  \end{align*}
\end{lemma}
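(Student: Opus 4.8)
The plan is to reduce the conditional variance to a single second-moment computation and then evaluate it term by term with Lemma~\ref{lemma:multi}, organizing everything by how the two index-intervals interact. Writing the summand as
\[
T_{i,k} := \ind(W_i>0)\Big(\sum_{j\in S_{i,k}}W_j\Big)\ind\big(W_{i+m}=0,\ \forall m\in\zahl{k-1}\big)\ind(W_{i+k}>0),
\]
so that $\overline{\xi}_b = 1 - \tfrac{3}{n^2-1}\sum_{i=1}^{n-1}\sum_{k=1}^{n-i}T_{i,k}$, the identity $\Var[\overline{\xi}_b\given\mX,\mY] = \tfrac{9}{(n^2-1)^2}\big(\E[(\sum T_{i,k})^2\given\mX,\mY] - (\E[\sum T_{i,k}\given\mX,\mY])^2\big)$ isolates the second moment as the only genuinely new object. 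The squared first moment is already $\big(\sum_{i,k}|S_{i,k}|c_{n,n-1,k}\big)^2$ by Lemma~\ref{lemma:exp,cond}, which is precisely the term subtracted inside the bracket. So everything reduces to expanding $\E[(\sum_{i,k}T_{i,k})^2\given\mX,\mY] = \sum_{i,k}\E[T_{i,k}^2] + \sum_{(i,k)\ne(j,\ell)}\E[T_{i,k}T_{j,\ell}]$.

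For the diagonal terms I would use that the two positivity factors expand by inclusion--exclusion as $\big(1-\ind(W_i=0)\big)\big(1-\ind(W_{i+k}=0)\big)$, which together with $\ind(\sum_{m=1}^{k-1}W_{i+m}=0)$ produces four zero-pattern events with $|\cT|\in\{k-1,k,k,k+1\}$. Since $S_{i,k}$ is a.s.\ disjoint from $\{i,\ldots,i+k\}$, applying the second identity of Lemma~\ref{lemma:multi} to $(\sum_{j\in S_{i,k}}W_j)^2$ and reading off the alternating combination gives exactly $|S_{i,k}|c_{n,n-1,k} + |S_{i,k}|^2 c_{n,n-2,k}(1-1/n)$, matching the diagonal summand.

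The cross terms carry the real work. First I would dispose of the vanishing configurations: $T_{i,k}T_{i,\ell}=0$ for $k\ne\ell$ (say for $k<\ell$, the factor $\ind(W_{i+k}>0)$ from $T_{i,k}$ contradicts $\ind(W_{i+k}=0)$ forced by $T_{i,\ell}$), and, for $i<j$, $T_{i,k}T_{j,\ell}=0$ whenever $k>j-i$ (then $j\in\{i+1,\ldots,i+k-1\}$ must be both zero and positive). This leaves only $k=j-i$ and $k<j-i$, with symmetry supplying the factor $2\sum_{i<j}$. In the case $k=j-i$ the positive indices are the three distinct points $\{i,j,j+\ell\}$, whose inclusion--exclusion generates $a_{n,\cdot,k,\ell}$; in the case $k<j-i$ they are the four points $\{i,i+k,j,j+\ell\}$, generating $b_{n,\cdot,k,\ell}$. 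Splitting the product $(\sum_{s\in S_{i,k}}W_s)(\sum_{t\in S_{j,\ell}}W_t)$ into its diagonal part $\sum_{s\in S_{i,k\cap j,\ell}}W_s^2$ (second identity of Lemma~\ref{lemma:multi}, yielding the $|S_{i,k\cap j,\ell}|$ piece at exponent $n-1$) and its off-diagonal part (third identity, yielding $|S_{i,k}||S_{j,\ell}|(1-1/n)$ at exponent $n-2$, after recombining $|S_{i,k\cap j,\ell}|+|S_{i,k\setminus j,\ell}|=|S_{i,k}|$ up to between-interval indices that carry zero weight on the relevant zero-pattern event, and symmetrically for $S_{j,\ell}$) accounts for the first two summands of the double sum.

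The main obstacle is the bookkeeping when a positivity index of one interval lands inside the summation set of the other---e.g.\ $j\in S_{i,k}$ or $j+\ell\in S_{i,k}$, and symmetrically $i$ or $i+k$ in $S_{j,\ell}$. In such a configuration $W_j$ is simultaneously forced positive and already summed over, so the sets fed into Lemma~\ref{lemma:multi} are no longer disjoint and a naive application of the third identity miscounts; a careful re-derivation shows the correction effectively removes one point from the inclusion--exclusion (turning $b$ into $a$ at exponent $n-2$), producing exactly the two correction summands carrying $\ind(j\in S_{i,k})+\ind(j+\ell\in S_{i,k})$ and $\ind(i\in S_{j,\ell})+\ind(i+k\in S_{j,\ell})$, both supported on $k<j-i$. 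Finally I would argue that all remaining coincidences---two simultaneous such overlaps, or a positivity index coinciding with an $S$-set element on both sides---force enough indices that their total contribution to the bracket is $O(n^2)$, i.e.\ $O(n^{-2})$ to the variance after the $9/(n^2-1)^2$ scaling, which is the trailing error. Pinning down the exact coefficients and signs in this last reduction, and certifying that nothing of order $n^3$ hides among the neglected coincidences, is the delicate part; the rest is a mechanical, if lengthy, application of the three moment identities.
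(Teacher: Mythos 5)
Your proposal is correct and takes essentially the same route as the paper's proof: the same reduction of the conditional variance to a second-moment computation (with the squared mean supplied by Lemma~\ref{lemma:exp,cond}), the same vanishing observations ($T_{i,k}T_{i,\ell}=0$ for $k\ne\ell$ and $T_{i,k}T_{j,\ell}=0$ for $k>j-i$), the same three-point versus four-point inclusion--exclusion yielding $a_{n,\cdot,k,\ell}$ when $k=j-i$ and $b_{n,\cdot,k,\ell}$ when $k<j-i$, the same splitting of each $S$-sum into intersection, difference, and boundary-point pieces, and the same absorption $W_j\ind(W_j>0)=W_j$ that turns $b$ into $a$ in the correction terms carrying $\ind(j\in S_{i,k})$, etc. The residual coincidences you flag as the delicate part are handled in the paper exactly as you suggest: they are bounded as $O(na_{n,n-2,k,\ell})$ (when $k=j-i$) and $O(a_{n,n-2,k,\ell})$ (when $k<j-i$) per-term errors, which aggregate to the stated $O(n^2)$ inside the bracket.
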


The following lemma further calculates the expectations of those cardinalities present in Lemma~\ref{lemma:var,cond}.

\begin{lemma}\label{lemma:card}
  Assume $Z_1,\ldots,Z_n,Y_1,Y_2,Y_3,Y_4$ are i.i.d. from a continuous distribution. Define 
  \begin{align*}
    &S_{12} := \sum_{i=1}^n \ind\Big(Y_1 \wedge Y_2 < Z_i < Y_1 \vee Y_2\Big),\\
    &S_{1234} := \sum_{i=1}^n \ind\Big((Y_1 \wedge Y_2) \vee (Y_3 \wedge Y_4) < Z_i < (Y_1 \vee Y_2) \wedge (Y_3 \vee Y_4)\Big),\\
    &S_{12\setminus34} := \sum_{i=1}^n \ind\Big(Y_1 \wedge Y_2 < Z_i < Y_1 \vee Y_2, \Big\{Z_i > Y_3 \vee Y_4 ~{\rm or}~ Z_i < Y_3 \wedge Y_4\Big\}\Big),\\
    &S_{34\setminus12} := \sum_{i=1}^n \ind\Big(Y_3 \wedge Y_4 < Z_i < Y_3 \vee Y_4, \Big\{Z_i > Y_1 \vee Y_2 ~{\rm or}~ Z_i < Y_1 \wedge Y_2\Big\}\Big),\\
    &S_{1223} := \sum_{i=1}^n \ind\Big((Y_1 \wedge Y_2) \vee (Y_2 \wedge Y_3) < Z_i < (Y_1 \vee Y_2) \wedge (Y_2 \vee Y_3)\Big),\\
    &S_{12\setminus23} := \sum_{i=1}^n \ind\Big(Y_1 \wedge Y_2 < Z_i < Y_1 \vee Y_2, \Big\{Z_i > Y_2 \vee Y_3 ~{\rm or}~ Z_i < Y_2 \wedge Y_3\Big\}\Big),\\
    &S_{23\setminus12} := \sum_{i=1}^n \ind\Big(Y_2 \wedge Y_3 < Z_i < Y_2 \vee Y_3, \Big\{Z_i > Y_1 \vee Y_2 ~{\rm or}~ Z_i < Y_1 \wedge Y_2\Big\}\Big).
  \end{align*}
  We then have
  \begin{align*}
    & \E[S_{12}] = \frac{1}{3}n, ~~ \E[S_{12}^2] = \frac{1}{6}n^2 + \frac{1}{6}n,~~ \E[S_{1234}] = \frac{2}{15}n, ~~ \E[S_{1234}^2] = \frac{2}{45}n^2 + \frac{4}{45}n,\\
    & \E[S_{1234}S_{12\setminus34}] = \E[S_{1234}S_{34\setminus12}] = \E[S_{12\setminus34}S_{34\setminus12}] = \frac{1}{45}n^2 - \frac{1}{45}n,~~ \E[S_{1223}] = \frac{1}{6}n,\\
    & \E[S_{1223}^2] = \frac{1}{15}n^2 + \frac{1}{10}n,~~ \E[S_{1223}S_{12\setminus23}] = \E[S_{1223}S_{23\setminus12}] = \E[S_{12\setminus23}S_{23\setminus12}] = \frac{1}{60}n^2 - \frac{1}{60}n,\\
    & \E[\ind(Y_1 \wedge Y_2 < Y_3 < Y_1 \vee Y_2)S_{1234}] = \frac{1}{15}n ,~~ {\rm and}~\E[\ind(Y_1 \wedge Y_2 < Y_3 < Y_1 \vee Y_2)S_{34\setminus12}] = \frac{1}{30}n.
  \end{align*}
\end{lemma}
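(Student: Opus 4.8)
The plan is to reduce every identity to a probability over a bounded number of i.i.d.\ continuous variables and then count orderings. Each quantity in the statement is a (possibly indicator-weighted) sum over $i\in\zahl{n}$ of an indicator depending only on $Z_i$ together with the fixed variables $Y_1,\dots,Y_4$. Writing $S=\sum_i g(Z_i)$ for the relevant $\{0,1\}$-valued $g$, linearity of expectation together with the exchangeability of $Z_1,\dots,Z_n$ gives
\begin{align*}
\E[S]=n\,\E[g(Z_1)],\qquad \E[S^2]=n\,\E[g(Z_1)]+n(n-1)\,\E[g(Z_1)g(Z_2)],
\end{align*}
and analogously $\E[SS']=n\,\E[g g']+n(n-1)\,\E[g(Z_1)g'(Z_2)]$ for a product of two such sums, while each weighted term $\E[\ind(\cdot)S]$ equals $n\,\E[\ind(\cdot)g(Z_1)]$. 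Thus every identity reduces to a probability over at most six i.i.d.\ continuous variables, namely $Z_1,Z_2,Y_1,Y_2,Y_3,Y_4$ (fewer when only one $Z$, or only $Y_1,Y_2,Y_3$, appears). Since the law is continuous there are no ties almost surely, and each probability equals the number of favorable strict orderings divided by $m!$, with $m$ the number of variables involved.

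First I would dispatch the single-$Z$ probabilities by conditioning on the rank of $Z_1$ among the variables present. The event ``$Z_1$ lies strictly between $Y_a$ and $Y_b$'' is exactly ``precisely one of $Y_a,Y_b$ falls below $Z_1$,'' so I only need to track which $Y$'s lie below $Z_1$; given that $\ell$ of them do, the subset of that size is uniform. This yields $\E[g]=1/3$ for $S_{12}$, $\E[g]=2/15$ for $S_{1234}$ (forcing $Z_1$ to be the median of five, with one $Y$ below from each pair), and $\E[g]=1/6$ for $S_{1223}$; these also supply the diagonal ($i=j$) contributions to the squares.

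The bulk of the work is the two-$Z$ (off-diagonal) probabilities on the six variables $Y_1,\dots,Y_4,Z_1,Z_2$, together with the five-variable analogues. The clean route is to first fix the order type of the two random intervals $[Y_1\wedge Y_2, Y_1\vee Y_2]$ and $[Y_3\wedge Y_4, Y_3\vee Y_4]$ (nested, crossing, or disjoint), which determines the overlap region used by $S_{1234}$ and the two ``exclusive'' regions used by $S_{12\setminus34}$ and $S_{34\setminus12}$, and then to place $Z_1,Z_2$ by the same rank/subset bookkeeping. A helpful simplification is that all cross-terms between different regions have vanishing diagonals: a single $Z_i$ cannot simultaneously lie inside $[Y_3\wedge Y_4,Y_3\vee Y_4]$ (as $S_{1234}$ requires) and outside it (as $S_{12\setminus34}$ requires), nor inside both exclusive regions at once, so for those products only the $n(n-1)$ term survives. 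The $1223$ family is identical but with three $Y$'s sharing the endpoint $Y_2$, and the two final weighted identities are five-variable counts carrying the extra constraint that $Y_3$ lie between $Y_1$ and $Y_2$.

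The hard part will be precisely this finite enumeration of the six-variable configurations: correctly partitioning the line into the overlap and the two exclusive regions according to the order type of the two intervals, and then counting the admissible placements of the pair $(Z_1,Z_2)$, is where all the delicacy lies. Once each probability is read off---$2/45$ for the $S_{1234}$ square, $1/45$ for the mixed $1234$ products, $1/15$ for the $S_{1223}$ square, $1/60$ for the mixed $1223$ products, and $1/15,\,1/30$ for the two weighted terms---multiplying by $n$ and $n(n-1)$ and collecting terms gives the stated closed forms, which is routine arithmetic.
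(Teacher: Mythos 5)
Your reduction framework is correct and is exactly what the lemma needs: by exchangeability, $\E[S]=n\,\E[g(Z_1)]$, $\E[S^2]=n\,\E[g(Z_1)]+n(n-1)\,\E[g(Z_1)g(Z_2)]$, and $\E[SS']=n\,\E[gg']+n(n-1)\,\E[g(Z_1)g'(Z_2)]$; your observation that the mixed products have identically vanishing diagonals (a point in the overlap of the two intervals cannot also lie outside one of them, and the two exclusive regions are disjoint) is precisely what produces the pure $n(n-1)$ forms $\frac{1}{45}(n^2-n)$ and $\frac{1}{60}(n^2-n)$. Your single-$Z$ rank/subset computations ($1/3$, $2/15$, $1/6$) are also right. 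Where you genuinely differ from the paper is in how the remaining probabilities are evaluated: the paper applies the probability integral transform to reduce to uniforms on $[0,1]$, conditions on the $Z$'s (and, for the weighted terms, on $Y_3$), writes each conditional probability as an explicit polynomial---e.g. $4(Z_i\wedge Z_j)^2(1-Z_i\vee Z_j)^2$ for the $S_{1234}$ off-diagonal term---and integrates; you propose counting strict orderings of the at most six variables involved. (Amusingly, the paper itself uses your method for $\E[S_{12}^2]$, counting ranks in a four-element set, before switching to integration for everything else.) Your route is viable and arguably more elementary: for instance, the $S_{1234}$ off-diagonal probability is $2\cdot2\cdot2\cdot2\cdot2=32$ favorable orderings out of $6!=720$, i.e. $2/45$, with no integral needed.

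The genuine gap is that you never perform these counts. All of the two-$Z$ probabilities ($2/45$; $1/45$ for the three mixed $1234$ products; $1/15$; $1/60$ for the three mixed $1223$ products) and both weighted five-variable probabilities ($1/15$ and $1/30$) are asserted to be ``read off'' after an enumeration you explicitly defer, yet these numbers are the entire computational content of the lemma. Worse, they are exactly the values one can reverse-engineer from the lemma's stated answers (e.g. $\E[S_{1234}^2]=\frac{2}{45}n^2+\frac{4}{45}n$ together with the diagonal $\frac{2}{15}n$ forces the off-diagonal probability $2/45$), so stating them without derivation carries no evidential weight. To close the gap you must actually carry out, for each product, the case analysis you allude to: which orderings of the four interval endpoints are compatible with the event, where the two $Z$'s sit relative to them, and the $Z_1<Z_2$ versus $Z_2<Z_1$ split that the paper handles explicitly in its conditional-probability formulas; then verify that each count divided by $720$ (or $120$ for the five-variable weighted terms) equals the claimed value. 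As written, the proposal is a sound plan with correct scaffolding, but not yet a proof.
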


Applying Lemma~\ref{lemma:card} to Lemma~\ref{lemma:var,cond}, we have the following lemma on the expectation of the conditional variance of $\overline{\xi}_b$.

\begin{lemma}\label{lemma:var} We have
  \begin{align*}
    \limsup_{n \to \infty} n \E[\Var[\overline{\xi}_b \given \mX,\mY]] \le \frac{3}{5} - \frac{8}{5} \frac{1}{e^2} \approx 0.3835 < \frac{2}{5}.
  \end{align*}
\end{lemma}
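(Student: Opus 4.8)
The plan is to start from the exact conditional-variance formula in Lemma~\ref{lemma:var,cond} and take its expectation over the original sample $(\mX,\mY)$. By linearity this reduces to replacing every cardinality and product of cardinalities ($\lvert S_{i,k}\rvert$, $\lvert S_{i,k}\rvert^2$, $\lvert S_{i,k\cap j,\ell}\rvert$, $\lvert S_{i,k\setminus j,\ell}\rvert$, and the indicator-weighted products) by its expectation, all of which are supplied by Lemma~\ref{lemma:card} once the correct number of ``free'' indices is substituted (namely $n$ minus the endpoints and excluded intermediate indices, so $n-k-1$ for a single gap, $n-k-\ell$ for two gaps, and so on). The resulting expression splits into three pieces according to the geometry of the gaps: the diagonal ($c$-coefficient) piece from a single gap; the adjacent cross piece $k=j-i$, in which the two gaps share an endpoint ($a$-coefficients, $S_{1223}$-type cardinalities); and the separated cross piece $k<j-i$, with four distinct endpoints ($b$-coefficients, $S_{1234}$-type cardinalities); together with the expansion of the subtracted square $\E[(\sum_{i,k}\lvert S_{i,k}\rvert c_{n,n-1,k})^2]$ into pair-products $\E[\lvert S_{i,k}\rvert\,\lvert S_{j,\ell}\rvert]$.

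The analytic engine is the observation that $c_{n,m,k}$, $a_{n,m,k,\ell}$, $b_{n,m,k,\ell}$ are, respectively, the second, third, and fourth finite differences of $u\mapsto(1-u/n)^m$ in the combined variable $u=k+\ell$ (or $u=k$), while the cardinality-moment weights are low-degree polynomials in the free-index count $\approx n$. I would therefore carry out Abel summation (summation by parts) in $k$ and $\ell$, once per order of the difference. Each step produces two kinds of surviving contributions: interior/bulk terms, where the remaining smooth differences turn the normalized sums into Riemann integrals evaluating to rational constants (ultimately giving the $3/5$); and boundary terms localized at small index, where factors $(1-1/n)^{n}\to 1/e$ and $(1-2/n)^{n}\to 1/e^2$ appear (giving the $-\tfrac85 e^{-2}$). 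Tracking powers of $n$ shows that, after the normalization $\tfrac{9n}{(n^2-1)^2}\sim 9/n^3$, only contributions of total order $n^3$ survive: the diagonal linear term is $O(n^{-1})$ and drops, whereas the diagonal quadratic term converges to a strictly positive constant.

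The crux, and the main obstacle, is a delicate cancellation. The separated and adjacent cross sums and the subtracted square are each individually of order $n^4$, so dividing by $n^3$ would diverge; only their difference is genuinely of order $n^3$, reflecting the fact that $\Var[\overline{\xi}_b\given\mX,\mY]$ is itself of order $n^{-1}$. The $\pm$ patterns built into $a_{n,m,k,\ell}$ and $b_{n,m,k,\ell}$ encode precisely this cancellation, so the bookkeeping must be arranged---collecting the coefficient of each $\lvert S\rvert$-moment before summing, and telescoping the fourth difference against the slowly varying weight---so that the leading $n^4$ terms cancel exactly and the order-$n^3$ remainder is isolated cleanly. I expect essentially all the difficulty to lie here, both in avoiding spurious divergences and in pinning down the boundary and interior constants correctly.

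Finally I would assemble the surviving constants. The more awkward pieces---the $S_{i,k\setminus j,\ell}$ products and the indicator-weighted terms $\ind(j\in S_{i,k})$, $\ind(i\in S_{j,\ell})$, and so on---I would bound from above rather than evaluate exactly, which (together with the $O(n^{-2})$ remainder inherited from Lemma~\ref{lemma:var,cond}) is what turns the conclusion into a $\limsup$ upper bound. Summing the positive diagonal-quadratic contribution against the negative contributions from the subtracted square and the cross terms yields $\tfrac35-\tfrac85 e^{-2}\approx 0.3835$, and a direct numerical comparison confirms $0.3835<2/5$, completing the proof.
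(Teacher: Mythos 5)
Your overall skeleton matches the paper's: take the expectation of the formula in Lemma~\ref{lemma:var,cond}, feed in the moments of Lemma~\ref{lemma:card} split by gap geometry (single gap with $c$-coefficients; adjacent gaps $k=j-i$ with $a$-coefficients and $S_{1223}$-type moments; separated gaps $k<j-i$ with $b$-coefficients and $S_{1234}$-type moments), and verify that the order-$n^4$ contributions cancel so that only an order-$n^3$ remainder survives the normalization $9/(n^2-1)^2$. That is exactly the paper's computation, including your identification of the $n^4$ cancellation as the crux.

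The genuine divergence --- and the gap --- lies in your treatment of the subtracted square and of the ``awkward'' terms. In the paper, the inequality in the lemma has a single one-line source: Jensen's inequality, $\E[(\sum_{i,k}\lvert S_{i,k}\rvert c_{n,n-1,k})^2] \ge (\E[\sum_{i,k}\lvert S_{i,k}\rvert c_{n,n-1,k}])^2$, with the right-hand side already known from the proof of Lemma~\ref{lemma:exp}; every other term, including the indicator-weighted ones, is then evaluated \emph{exactly} to $O(n^2)$ --- the last two moments of Lemma~\ref{lemma:card}, $\E[\ind(\cdot)S_{1234}]=n/15$ and $\E[\ind(\cdot)S_{34\setminus12}]=n/30$, exist precisely for this purpose. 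You instead propose to expand the square into pair products $\E[\lvert S_{i,k}\rvert\,\lvert S_{j,\ell}\rvert]$, which is feasible in principle but requires overlapping-index configurations and exclusion-set corrections (the removed sets $\{t:i<t<i+k\}$ differ between the two factors and enter at order $n^3$, since they are weighted by $c_{n,n-1,k}c_{n,n-1,\ell}\approx e^{-k-\ell}$ and summed over $i,j$) that Lemma~\ref{lemma:card} does not package; this is exactly the ``more involved analysis'' the paper explicitly declines to carry out. Meanwhile you plan to merely upper-bound the $S_{i,k\setminus j,\ell}$ products and the indicator terms. That combination is where the plan breaks: the margin between the target $3/5-(8/5)e^{-2}\approx 0.3835$ and $2/5$ is only about $0.016$, whereas the individual order-$n^3$ pieces contribute order-one constants after normalization (the indicator terms alone contribute $\tfrac{18}{5}(1-e^{-1})\approx 2.28$), and the final constant emerges from massive cancellation --- in particular, every $e^{-1}$ term cancels exactly in the assembly leading to $9(\tfrac1{15}-\tfrac{8}{45}e^{-2})$. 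Any non-tight bound on one of these positive pieces destroys that cancellation and will generically push the limit superior above $2/5$, so they must all be evaluated exactly at leading order. The only place where a one-sided bound is affordable is the subtracted square itself, and there Jensen's inequality gives it to you for free; adopting that step both repairs your argument and collapses most of its labor.
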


A more involved analysis of $\Var[\sum_{i=1}^{n-1} \sum_{k=1}^{n-i} \lvert S_{i,k} \rvert c_{n,n-1,k}]$ in Lemma~\ref{lemma:var,cond} can lead to the exact limit value of $n \E[\Var[\overline{\xi}_b \given \mX,\mY]]$ following the same proof techniques as in Lemma~\ref{lemma:var}. However, a limit superior of $n \E[\Var[\overline{\xi}_b \given \mX,\mY]]$ smaller than $2/5$ appears to be sufficient for establishing the bootstrap inconsistency.

Combining Lemmas~\ref{lemma:exp} and \ref{lemma:var}, a proof of Theorem~\ref{thm:main} is finally ready.

\begin{proof}[Proof of Theorem~\ref{thm:main}]

By Lemmas~\ref{lemma:l2}, \ref{lemma:exp}, and \ref{lemma:var}, one has
\begin{align}\label{eq:exp}
  \lim_{n \to \infty} \E[\tilde{\xi}_b] = \frac{1}{e},
\end{align}
and
\begin{align}\label{eq:var}
  \limsup_{n \to \infty} n \E[\Var[\tilde{\xi}_b \given \mX,\mY]] \le \frac{3}{5} - \frac{8}{5} \frac{1}{e^2} \approx 0.3835 < \frac{2}{5}.
\end{align}

Proposition \ref{thm:chatterjee}, on the other hand, shows
\begin{align*}
  \sqrt{n} \xi_n \stackrel{\sf d}{\longrightarrow} N(0,2/5),
\end{align*}
where $\stackrel{\sf d}{\longrightarrow}$ stands for convergence in distribution.

{\bf Proof of the first statement.} For any $\epsilon>0$, since $\sqrt{n} \xi_n$ is bounded in probability by the above central limit theorem, one can find $C_1 = C_1(\epsilon)>0$ such that $\P(\lvert \sqrt{n} \xi_n \rvert > C_1) < \epsilon$ for all $n$ sufficiently large.

Note that for any constant $C_2>0$,
\begin{align*}
  &\E[\tilde{\xi}_b] = \E[\E[\tilde{\xi}_b \given \mX,\mY]] \\
  =& \E[\E[\tilde{\xi}_b \given \mX,\mY] \ind(\E[\tilde{\xi}_b \given \mX,\mY] > C_2)] + \E[\E[\tilde{\xi}_b \given \mX,\mY] \ind(\E[\tilde{\xi}_b \given \mX,\mY] \le C_2)]\\
  \le&  \E[\E[\tilde{\xi}_b \given \mX,\mY] \ind(\E[\tilde{\xi}_b \given \mX,\mY] > C_2)] + C_2.
\end{align*}
By \eqref{eq:exp} and the fact that $\E[\tilde{\xi}_b \given \mX,\mY]$ is universally bounded for all $\mX,\mY$, we can take $C_2<e^{-1}$ and then for all $n$ sufficiently large,
\begin{align*}
  \P(\E[\tilde{\xi}_b \given \mX,\mY] > C_2) \ge 2\epsilon,
\end{align*}
for some $\epsilon>0$.

Then for all $n$ sufficiently large, with probability at least $\epsilon$,
\begin{align*}
  n\E[(\tilde{\xi}_b - \xi_n)^2 \given \mX,\mY] \ge n(C_2^2 - 2C_2C_1/\sqrt{n} ).
\end{align*}
This implies that $n\E[(\tilde{\xi}_b - \xi_n)^2 \given \mX,\mY]$ does not converge in probability to any constant.

If, on the other hand, $n\Var[\tilde{\xi}_b \given \mX,\mY]$ converges to $2/5$ in probability, then by the Portmanteau lemma, 
\begin{align*}
  \liminf_{n \to \infty} n\E[\Var[\tilde{\xi}_b \given \mX,\mY]] \ge \frac{2}{5},
\end{align*}
which contradicts \eqref{eq:var}. Therefore $n\Var[\tilde{\xi}_b \given \mX,\mY]$ does not converge to $2/5$ in probability.

{\bf Proof of the second statement.} We adopt the argument of \cite{abadie2008failure}. For all sufficiently large $n$, we can establish in a similar way as above that, with probability at least $\epsilon$,
\begin{align*}
  \sqrt{n} (\tilde{\xi}_b - \xi_n) \ge \sqrt{n} (C_2 - C_1/\sqrt{n}).
\end{align*}
This shows that $\sqrt{n}(\tilde{\xi}_b - \xi_n)$ cannot converge in distribution to $N(0,2/5)$.

If $\sqrt{n}(\tilde{\xi}_b - \E[\tilde{\xi}_b\given\mX,\mY])$ converges in distribution to $N(0,2/5)$, then 
\[
\liminf_{n\to\infty} n\Var[\tilde{\xi}_b\given\mX,\mY] \ge 2/5 
\]
by the Portmanteau lemma. If the convergence in distribution holds for almost all sequences $X_1,X_2,\ldots$ and $Y_1,Y_2,\ldots$, then 
\[
\liminf_{n\to\infty} n\E[\Var[\tilde{\xi}_b\given\mX,\mY]] \ge 2/5, 
\]
which contradicts \eqref{eq:var}.
\end{proof}

\section{Auxiliary proofs}\label{sec:rest-proof}

\paragraph*{Notation.}

For any two real sequences $\{a_n\}$ and $\{b_n\}$, write $a_n \lesssim b_n$ (or equivalently, $b_n \gtrsim a_n$) if there exists a universal constant $C>0$ such that $|a_n/b_n| \le C$ for all sufficiently large $n$.

\subsection{Proof of Lemma~\ref{lemma:l2}}

\begin{proof}[Proof of Lemma~\ref{lemma:l2}]

Note that for any $i \in \zahl{n-1}$, if $X_{b,[i]_b} = X_{b,[i+1]_b}$, we have $Y_{b,[i]_b} = Y_{b,[i+1]_b}$ and then $R_{b,[i]_b} = R_{b,[i+1]_b}$ from the properties of bootstrap samples. Then $\lvert R_{b,[i+1]_b} - R_{b,[i]_b} \rvert$ is nonzero if and only if $X_{b,[i]_b} < X_{b,[i+1]_b}$. For any $i \in \zahl{n}$ such that $W_{[i]}=0$, $(X_{[i]}, Y_{[i]})$ will not appear in the bootstrap sample. For any $i \in \zahl{n}$ such that $W_{[i]}>0$, the right nearest neighbor of $X_{[i]}$ in the bootstrap sample with strictly larger value is $X_{[i+k]}$, where $k$ is the smallest positive interger such that $W_{[i+k]}>0$. If such $k$ does not exist, $X_{[i]}$ is the largest value in the bootstrap sample. Therefore we can write $\tilde{\xi}_b$ equivalently as
\begin{align*}
  \tilde{\xi}_b &:= 1 - \frac{n}{2\sum_{i=1}^n W_i \tL_i(n-\tL_i)} \sum_{i=1}^{n-1} \ind\Big(W_{[i]}>0\Big)\Big\lvert \tR_{[i+k(i)]} - \tR_{[i]} \Big\rvert,
\end{align*}
where
\begin{align*}
  \tL_i = \sum_{j=1}^n W_j \ind(Y_j \ge Y_i).
\end{align*}

Now since $\tR_i \le n$ for any $i \in \zahl{n}$, we have
\begin{align*}
  \lvert \tilde{\xi}_b - \hat{\xi}_b \rvert =& \Big\lvert \Big(\frac{n}{2\sum_{i=1}^n W_i \tL_i(n-\tL_i)} - \frac{3}{n^2-1} \Big) \sum_{i=1}^{n-1} \ind\Big(W_{[i]}>0\Big)\Big\lvert \tR_{[i+k(i)]} - \tR_{[i]} \Big\rvert \Big\rvert\\
  \le& 2n(n-1) \Big\lvert \frac{n}{2\sum_{i=1}^n W_i \tL_i(n-\tL_i)} - \frac{3}{n^2-1} \Big\rvert.
\end{align*}


By a careful use of the Eforn-Stein inequality, we have the following lemma.

\begin{lemma}\label{lemma:l2var}
  For almost all $\mX = (X_1,\ldots,X_n)$ and $\mY = (Y_1,\ldots,Y_n)$, for any $\epsilon = \epsilon(n)>0$,
  \begin{align*}
    \P\Big(\Big\lvert \sum_{i=1}^n W_i \tL_i(n-\tL_i) - \frac{n^3}{6} \Big\rvert > \epsilon \Biggiven \mX,\mY \Big) \lesssim \epsilon^{-2} n^3.
  \end{align*}
\end{lemma}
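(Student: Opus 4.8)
The plan is to establish the tail bound by a conditional Chebyshev inequality applied to $T:=\sum_{i=1}^n W_i\tL_i(n-\tL_i)$. The first move is to rewrite $T$ in rank coordinates. Ordering the (a.s.\ distinct) original responses, let $m_r$ be the number of bootstrap copies of the unit with the $r$-th smallest $Y$-value and let $M_r:=\sum_{r'\ge r}m_{r'}$ be that unit's bootstrap reversed rank; then $W_i\tL_i=m_rM_r$ for the corresponding $r$, so $T=\sum_{r=1}^n m_rM_r(n-M_r)$. Since this depends on $\mX,\mY$ only through the ordering of the $Y_i$'s, both $\E[T\given\mX,\mY]$ and $\Var[T\given\mX,\mY]$ are functions of $n$ alone for a.e.\ sample, and it suffices to bound each once. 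Chebyshev then reduces the lemma to two claims: (a)~$\E[T\given\mX,\mY]=n^3/6+O(n)$, and (b)~$\Var[T\given\mX,\mY]\lesssim n^3$. The $O(n)$ bias is harmless: the asserted bound $\epsilon^{-2}n^3$ already exceeds $1$ whenever $\epsilon\lesssim n^{3/2}$, which dominates the bias, so only the regime $\epsilon\gtrsim n^{3/2}$ needs the genuine concentration.

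For (a) I would use that $M_r\sim\mathrm{Bin}(n,(n-r+1)/n)$ and $\E[m_r\given M_r]=M_r/(n-r+1)$, which gives $\E[T\given\mX,\mY]=\sum_{r=1}^n (n-r+1)^{-1}\,\E[M_r^2(n-M_r)]$; evaluating this through the factorial moments of the binomial yields $n^3/6+O(n)$ (the leading correction being $-5n/6$). This is a routine computation.

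For (b) I would view $T=\sum_{s,s',s''}\ind(Y_{I_{s''}}<Y_{I_s}\le Y_{I_{s'}})$ as a symmetric function of the i.i.d.\ bootstrap indices $I_1,\ldots,I_n$ and apply Efron--Stein, $\Var[T\given\mX,\mY]\le\frac n2\,\E[(T-T^{(1)})^2\given\mX,\mY]$, where $T^{(1)}$ replaces $I_1$ by an independent copy. The naive bounded-difference estimate is fatal here: swapping $I_1$ from the rank-$r_a$ value to the rank-$r_b$ value (say $r_a<r_b$) shifts $M_r\mapsto M_r+1$ for every $r\in(r_a,r_b]$, so $|T-T^{(1)}|$ can be of order $n^2$, giving only $\Var\lesssim n^5$. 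The careful point is that the $O(n^2)$ contributions cancel. With $g(M):=M(n-M)$ the single-swap difference is exactly
\[
T^{(1)}-T=\sum_{r_a<r\le r_b}m_r(n-1-2M_r)+g(M_{r_b}+1)-g(M_{r_a}),
\]
and Abel summation gives the identity
\[
\sum_{r_a<r\le r_b}m_r(n-1-2M_r)=G(M_{r_a+1})-G(M_{r_b+1})-\sum_{r_a<r\le r_b}m_r^2,\qquad G(M):=(n-1)M-M^2=g(M)-M.
\]
Substituting and using $M_{r_a}=M_{r_a+1}+m_{r_a}$, $M_{r_b}=M_{r_b+1}+m_{r_b}$ collapses the leading $g$-values, leaving
\[
T^{(1)}-T=\big[g(M_{r_a+1})-g(M_{r_a})\big]+\big[g(M_{r_b}+1)-g(M_{r_b+1})\big]-(M_{r_a+1}-M_{r_b+1})-\sum_{r_a<r\le r_b}m_r^2.
\]

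Each of the four surviving terms has conditional second moment $O(n^2)$: the two $g$-differences are bounded by $n\,m_{r_a}$ and $n(m_{r_b}+1)$, where $m_{r_a},m_{r_b}$ have $O(1)$ fourth moments; the count $M_{r_a+1}-M_{r_b+1}\sim\mathrm{Bin}(\cdot,(r_b-r_a)/n)$ has square-expectation $O((r_b-r_a)^2)$; and $\sum_{r_a<r\le r_b}m_r^2$ likewise has second moment $O((r_b-r_a)^2)$. Averaging over $r_a,r_b$, which are independent and uniform on $\zahl n$ conditional on $\mX,\mY$, turns $(r_b-r_a)^2$ into $O(n^2)$, so $\E[(T-T^{(1)})^2\given\mX,\mY]\lesssim n^2$ and hence $\Var[T\given\mX,\mY]\lesssim n^3$. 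Combining (a) and (b) through Chebyshev, splitting into $\epsilon\gtrsim n$ and $\epsilon\lesssim n$, yields the claim. I expect the extraction of the cancellation above --- recognizing the Abel-summation identity that annihilates the $O(n^2)$ terms and reduces $T-T^{(1)}$ to quantities of order $n$ with $O(n^2)$ second moment --- to be the main obstacle; the mean computation and the per-term moment bounds are then bookkeeping with binomial moments.
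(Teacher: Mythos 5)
Your proposal is correct and follows essentially the same route as the paper's proof: Chebyshev's inequality after computing the conditional mean $n^3/6 - 5n/6 + O(1)$ via binomial moments, plus an Efron--Stein bound of $O(n^3)$ on the conditional variance obtained by representing the bootstrap weights through i.i.d.\ uniform indices and showing that the single-swap difference, after exact cancellation of its $O(n^2)$ parts, has second moment $O(n^2)$. The only cosmetic difference is that the paper extracts this cancellation by direct algebraic expansion of the swap difference (and a crude bound involving $\sum_i W_i^2$), whereas you organize it via an Abel-summation identity; the substance is identical.
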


Note that if $\lvert \sum_{i=1}^n W_i \tL_i(n-\tL_i) - n^3/6 \rvert \le \epsilon$ for some $\epsilon/n^3 \to 0$, then
\begin{align*}
  \Big\lvert \frac{1}{\sum_{i=1}^n W_i \tL_i(n-\tL_i)} - \frac{6}{n(n^2-1)} \Big\rvert \lesssim \frac{1}{n^6} \Big\lvert n(n^2-1) - 6 \sum_{i=1}^n W_i \tL_i(n-\tL_i) \Big\rvert \lesssim \frac{1}{n^6}(n + \epsilon),
\end{align*}
and then
\begin{align*}
  \lvert \tilde{\xi}_b - \hat{\xi}_b \rvert \lesssim \frac{1}{n^3}\epsilon + \frac{1}{n^2}.
\end{align*}

Also note that we always have $\lvert \tilde{\xi}_b - \hat{\xi}_b \rvert \lesssim 1$. By Lemma~\ref{lemma:l2var}, if $\epsilon/n^3 \to 0$, then for almost all $\mX,\mY$,
\begin{align*}
  &\E [(\tilde{\xi}_b - \hat{\xi}_b)^2 \given \mX, \mY] \lesssim \P\Big(\Big\lvert \sum_{i=1}^n W_i \tL_i(n-\tL_i) - \frac{n^3}{6} \Big\rvert > \epsilon \Biggiven \mX,\mY \Big) + \frac{1}{n^6}\epsilon^2 + \frac{1}{n^4}\\
  \lesssim& n^3 \frac{1}{\epsilon^2} + \frac{1}{n^6}\epsilon^2 + \frac{1}{n^4}.
\end{align*}

Taking $\epsilon \asymp n^{9/4}$, we obtain
\begin{align*}
  \E [(\tilde{\xi}_b - \hat{\xi}_b)^2 \given \mX, \mY] \lesssim n^{-3/2},
\end{align*}
and then the proof is complete.
\end{proof}

\subsection{Proof of Lemma~\ref{lemma:han-l1}}

\begin{proof}[Proof of Lemma~\ref{lemma:han-l1}]

By the definition of $k(i)$ for $i \in \zahl{n}$, we have for any $i \in \zahl{n}$,
\begin{align*}
  \Big\lvert \tR_{[i+k(i)]} - \tR_{[i]} \Big\rvert = \sum_{k=1}^{n-i} \Big\lvert \tR_{[i+k]} - \tR_{[i]} \Big\rvert \ind\Big(W_{[i+\ell]}=0,\forall \ell \in \zahl{k-1}\Big) \ind\Big(W_{[i+k]}>0\Big).
\end{align*}
From the definition of $[\tR_i]_{i=1}^n$, for any $i,k \in 
\zahl{n}$ with $i \neq k$,
\begin{align*}
  \Big\lvert \tR_k - \tR_i \Big\rvert = \Big\lvert \sum_{j=1}^n W_j \ind(Y_j \le Y_k) - \sum_{j=1}^n W_j \ind(Y_j \le Y_i) \Big\rvert = \sum_{j=1}^{n} W_j \ind\big(Y_i \wedge Y_k < Y_j \le Y_i \vee Y_k \big).
\end{align*}
Combining the above two with \eqref{eq:hatxib} yields
\begin{align*}
  \hat{\xi}_b =& 1 - \frac{3}{n^2-1} \sum_{i=1}^{n-1} \ind\Big(W_{[i]}>0\Big)\sum_{k=1}^{n-i}\Big[ \sum_{j=1}^{n} W_j \ind\big(Y_{[i]} \wedge Y_{[i+k]} < Y_j \le Y_{[i]} \vee Y_{[i+k]} \big) \Big] \\
  &\ind\Big(W_{[i+\ell]}=0,\forall \ell \in \zahl{k-1}\Big) \ind\Big(W_{[i+k]}>0\Big).
\end{align*}
We then have
\begin{align*}
 \lvert \overline{\xi}_b - \hat{\xi}_b \rvert \le& \frac{3}{n^2-1} \sum_{i=1}^{n-1} \ind\Big(W_{[i]}>0\Big)\sum_{k=1}^{n-i}\Big( W_{[i]} + W_{[i+k]}\Big)  \ind\Big(W_{[i+\ell]}=0,\forall \ell \in \zahl{k-1}\Big) \ind\Big(W_{[i+k]}>0\Big)\\
 \le& \frac{6}{n^2-1} \sum_{i=1}^n W_i = \frac{6n}{n^2-1} = O\Big(\frac{1}{n}\Big),
\end{align*}
since among all the units with positive weights, each unit only has at most one right NN and can be the right NN of at most one unit. 
\end{proof}

\subsection{Proof of Lemma~\ref{lemma:multi}}

\begin{proof}[Proof of Lemma~\ref{lemma:multi}]
Note that $(W_1,\ldots,W_n) \sim M_n(n;1/n,\ldots,1/n)$, then since $\cS$ and $\cT$ are disjoint,
\begin{align*}
  \sum_{s\in\cS} W_s \Biggiven \sum_{t \in \cT} W_t \sim {\rm Bin}\Big( n - \sum_{t \in \cT} W_t, \frac{\lvert \cS \rvert/n}{1 - \lvert \cT \rvert/n} \Big), ~~ \sum_{t \in \cT} W_t \sim {\rm Bin}\Big(n, \frac{\lvert \cT \rvert}{n}\Big).
\end{align*}
We then obtain
\begin{align*}
  & \E \Big[\Big(\sum_{s\in\cS} W_s\Big) \ind\Big(\sum_{t \in \cT} W_t = 0\Big)\Big] = \E \Big[\sum_{s\in\cS} W_s \Biggiven \sum_{t \in \cT} W_t = 0\Big] \P\Big(\sum_{t \in \cT} W_t = 0 \Big)\\
  =& n \frac{\lvert \cS \rvert/n}{1 - \lvert \cT \rvert/n} \Big(1-\frac{\lvert \cT \rvert}{n}\Big)^n = \lvert \cS \rvert \Big(1-\frac{\lvert \cT \rvert}{n}\Big)^{n-1},
\end{align*}
and
\begin{align*}
  & \E \Big[\Big(\sum_{s\in\cS} W_s\Big)^2 \ind\Big(\sum_{t \in \cT} W_t = 0\Big)\Big] = \E \Big[\Big(\sum_{s\in\cS} W_s\Big)^2 \Biggiven \sum_{t \in \cT} W_t = 0\Big] \P\Big(\sum_{t \in \cT} W_t = 0 \Big)\\
  =& \Big[n \frac{\lvert \cS \rvert/n}{1 - \lvert \cT \rvert/n} + n(n-1) \Big(\frac{\lvert \cS \rvert/n}{1 - \lvert \cT \rvert/n}\Big)^2 \Big]\Big(1-\frac{\lvert \cT \rvert}{n}\Big)^n \\
  =& \lvert \cS \rvert \Big(1-\frac{\lvert \cT \rvert}{n}\Big)^{n-1} + \lvert \cS \rvert^2 \Big(1-\frac{1}{n}\Big) \Big(1-\frac{\lvert \cT \rvert}{n}\Big)^{n-2}.
\end{align*}
Note that from the properties of the multinomial distribution,
\begin{align*}
  \sum_{s\in\cS} W_s, \sum_{s\in\cS'} W_s \Biggiven \sum_{s\in\cS\cup\cS'} W_s \sim M_2\Big(\sum_{s\in\cS\cup\cS'} W_s; \frac{\lvert \cS \rvert}{\lvert \cS \rvert + \lvert \cS' \rvert}, \frac{\lvert \cS' \rvert}{\lvert \cS \rvert + \lvert \cS' \rvert}\Big).
\end{align*}
It then holds that
\begin{align*}
  & \E \Big[\Big(\sum_{s\in\cS} W_s\Big) \Big(\sum_{s\in\cS'} W_s\Big) \ind\Big(\sum_{t \in \cT} W_t = 0\Big)\Big] = \E \Big[\Big(\sum_{s\in\cS} W_s\Big) \Big(\sum_{s\in\cS'} W_s\Big) \Biggiven \sum_{t \in \cT} W_t = 0\Big] \P\Big(\sum_{t \in \cT} W_t = 0 \Big)\\
  =& \E \Big[\E \Big[\Big(\sum_{s\in\cS} W_s\Big) \Big(\sum_{s\in\cS'} W_s\Big) \Biggiven \sum_{s\in\cS\cup\cS'} W_s, \sum_{t \in \cT} W_t = 0\Big] \Biggiven \sum_{t \in \cT} W_t = 0 \Big] \P\Big(\sum_{t \in \cT} W_t = 0 \Big)\\
  =& \E \Big[\Big[\Big(\sum_{s\in\cS\cup\cS'} W_s\Big)^2 - \Big(\sum_{s\in\cS\cup\cS'} W_s\Big)\Big] \frac{\lvert \cS \rvert \lvert \cS' \rvert}{(\lvert \cS \rvert + \lvert \cS' \rvert)^2} \Biggiven \sum_{t \in \cT} W_t = 0 \Big] \P\Big(\sum_{t \in \cT} W_t = 0 \Big)\\
  =& n(n-1) \Big(\frac{(\lvert \cS \rvert + \lvert \cS' \rvert)/n}{1 - \lvert \cT \rvert/n}\Big)^2 \frac{\lvert \cS \rvert \lvert \cS' \rvert}{(\lvert \cS \rvert + \lvert \cS' \rvert)^2} \Big(1-\frac{\lvert \cT \rvert}{n}\Big)^n\\
  =& \lvert \cS \rvert \lvert \cS' \rvert \Big(1-\frac{1}{n}\Big) \Big(1-\frac{\lvert \cT \rvert}{n}\Big)^{n-2}.
\end{align*}

The proof is thus complete.
\end{proof}

\subsection{Proof of Lemma~\ref{lemma:exp,cond}}

\begin{proof}[Proof of Lemma~\ref{lemma:exp,cond}]
For any $i \in \zahl{n-1}$ and $k \in \zahl{n-i}$, consider
\[
  \Big( \sum_{j\in S_{i,k}} W_j  \Big) \ind\Big(W_i>0\Big) \ind\Big(W_{i+\ell}=0,\forall \ell \in \zahl{k-1}\Big) \ind\Big(W_{i+k}>0\Big).
\]
We decompose the above term as
\begin{align*}
  &\ind\Big(W_i>0\Big) \ind\Big(W_{i+\ell}=0,\forall \ell \in \zahl{k-1}\Big) \ind\Big(W_{i+k}>0\Big)\\
  =&\Big[1-\ind\Big(W_i=0\Big)\Big] \ind\Big(W_{i+\ell}=0,\forall \ell \in \zahl{k-1}\Big) \Big[1-\ind\Big(W_{i+k}=0\Big)\Big]\\
  =& \ind\Big(W_{i+\ell}=0,\forall \ell \in \zahl{k-1}\Big) - \ind\Big(W_{i+\ell}=0,\forall \ell \in \zahl{k-1} \cup \{0\}\Big) \\
  &- \ind\Big(W_{i+\ell}=0,\forall \ell \in \zahl{k}\Big) + \ind\Big(W_{i+\ell}=0,\forall \ell \in \zahl{k} \cup \{0\}\Big).
\end{align*}
Note that for any $i \in \zahl{n-1}$ and $k \in \zahl{n-i}$, $S_{i,k}$ is disjoint with $\{i, i+1,\ldots,i+k\}$. Then, by Lemma~\ref{lemma:multi},
\begin{align*}
  &\E\Big[\Big(\sum_{j\in S_{i,k}} W_j\Big) \ind\Big(W_{i+\ell}=0,\forall \ell \in \zahl{k}\Big) \Big] = \lvert S_{i,k} \rvert \Big(1-\frac{k}{n}\Big)^{n-1},\\
  &\E\Big[\Big(\sum_{j\in S_{i,k}} W_j\Big) \ind\Big(W_{i+\ell}=0,\forall \ell \in \zahl{k-1}\Big) \Big] = \lvert S_{i,k} \rvert \Big(1-\frac{k-1}{n}\Big)^{n-1},\\
  &\E\Big[\Big(\sum_{j\in S_{i,k}} W_j\Big) \ind\Big(W_{i+\ell}=0,\forall \ell \in \zahl{k-1} \cup \{0\} \Big) \Big] = \lvert S_{i,k} \rvert \Big(1-\frac{k}{n}\Big)^{n-1},\\
  &\E\Big[\Big(\sum_{j\in S_{i,k}} W_j\Big) \ind\Big(W_{i+\ell}=0,\forall \ell \in \zahl{k} \cup \{0\} \Big) \Big] = \lvert S_{i,k} \rvert \Big(1-\frac{k+1}{n}\Big)^{n-1},
\end{align*}
and accordingly
\begin{align*}
  &\E[\overline{\xi}_b \given \mX,\mY] \\
  =& 1 - \frac{3}{n^2-1} \sum_{i=1}^{n-1} \sum_{k=1}^{n-i} \E \Big[\Big( \sum_{j\in S_{i,k}} W_j  \Big) \ind\Big(W_i>0\Big) \ind\Big(W_{i+\ell}=0,\forall \ell \in \zahl{k-1}\Big) \ind\Big(W_{i+k}>0\Big) \Biggiven \mX,\mY \Big]\\
  =& 1 - \frac{3}{n^2-1} \sum_{i=1}^{n-1} \sum_{k=1}^{n-i} \lvert S_{i,k} \rvert \Big[ \Big(1-\frac{k-1}{n}\Big)^{n-1} - 2 \Big(1-\frac{k}{n}\Big)^{n-1} + \Big(1-\frac{k+1}{n}\Big)^{n-1} \Big].
\end{align*}
This completes the proof.
\end{proof}

\subsection{Proof of Lemma~\ref{lemma:exp}}

\begin{proof}[Proof of Lemma~\ref{lemma:exp}]
We have
\begin{align*}
  & \E[\overline{\xi}_b] = 1 - \frac{3}{n^2-1} \sum_{i=1}^{n-1} \sum_{k=1}^{n-i} \E[\lvert S_{i,k} \rvert] \Big[ \Big(1-\frac{k-1}{n}\Big)^{n-1} - 2 \Big(1-\frac{k}{n}\Big)^{n-1} + \Big(1-\frac{k+1}{n}\Big)^{n-1} \Big]\\
  =& 1 - \frac{1}{n^2-1} \sum_{i=1}^{n-1} \sum_{k=1}^{n-i} (n-k-1) \Big[ \Big(1-\frac{k-1}{n}\Big)^{n-1} - 2 \Big(1-\frac{k}{n}\Big)^{n-1} + \Big(1-\frac{k+1}{n}\Big)^{n-1} \Big].
\end{align*}
Note that
\begin{align*}
  & \sum_{i=1}^{n-1} \sum_{k=1}^{n-i} \Big[ \Big(1-\frac{k-1}{n}\Big)^{n-1} - 2 \Big(1-\frac{k}{n}\Big)^{n-1} + \Big(1-\frac{k+1}{n}\Big)^{n-1} \Big]\\
  =& \sum_{i=1}^{n-1} \Big[ 1 - \Big(1-\frac{1}{n}\Big)^{n-1} - \Big(\frac{i}{n}\Big)^{n-1} + \Big(\frac{i-1}{n}\Big)^{n-1} \Big] \\
  =& (n-1)\Big[ 1 - \Big(1-\frac{1}{n}\Big)^{n-1}\Big] - \Big(1-\frac{1}{n}\Big)^{n-1},
\end{align*}
and
\begin{align*}
  & \sum_{i=1}^{n-1} \sum_{k=1}^{n-i} k \Big[ \Big(1-\frac{k-1}{n}\Big)^{n-1} - 2 \Big(1-\frac{k}{n}\Big)^{n-1} + \Big(1-\frac{k+1}{n}\Big)^{n-1} \Big]\\
  =& \sum_{i=1}^{n-1} \Big[ 1 - (n-i +1) \Big(\frac{i}{n}\Big)^{n-1} + (n-i) \Big(\frac{i-1}{n}\Big)^{n-1} \Big] = n-1-2\frac{\sum_{i=1}^{n-1} i^{n-1}}{n^{n-1}}.
\end{align*}
Note that 
\[
(1-1/n)^n = e^{-1} - (2e)^{-1}n^{-1} + O(n^{-2}) 
\]
and 
\[
\sum_{i=1}^n i^n/n^n \to e/(e-1) ~~~{\rm as}~~~ n \to \infty.
\]
We then obtain
\begin{align*}
  \E[\overline{\xi}_b] =& 1-\frac{1}{n+1} \Big[(n-1)\Big[ 1 - \Big(1-\frac{1}{n}\Big)^{n-1}\Big] - \Big(1-\frac{1}{n}\Big)^{n-1}\Big] + \frac{1}{n^2-1}\Big[n-1-2\frac{\sum_{i=1}^{n-1} i^{n-1}}{n^{n-1}}\Big]\\
  =& 1 - \Big[1-\frac{1}{e} - \Big(3-\frac{1}{2e}\Big)\frac{1}{n}+ O\Big(\frac{1}{n^2}\Big)\Big] = \frac{1}{e} + \Big(3-\frac{1}{2e}\Big)\frac{1}{n} + O\Big(\frac{1}{n^2}\Big).
\end{align*}

The proof is thus complete.
\end{proof}

\subsection{Proof of Lemma~\ref{lemma:var,cond}}

\begin{proof}[Proof of Lemma~\ref{lemma:var,cond}]
We have
\begin{align*}
  &\Var[\overline{\xi}_b \given \mX,\mY] \\
  =& \frac{9}{(n^2-1)^2} \Var\Big[\sum_{i=1}^{n-1} \ind\Big(W_i>0\Big)\sum_{k=1}^{n-i}\Big[ \sum_{j\in S_{i,k}} W_j  \Big]  \ind\Big(W_{i+\ell}=0,\forall \ell \in \zahl{k-1}\Big) \ind\Big(W_{i+k}>0\Big) \Biggiven \mX, \mY\Big].
  \yestag\label{eq:var,cond4}
\end{align*}
In the same way as Lemma~\ref{lemma:exp,cond},
\begin{align*}
  & \Big\{\E\Big[\sum_{i=1}^{n-1} \ind\Big(W_i>0\Big)\sum_{k=1}^{n-i}\Big[ \sum_{j\in S_{i,k}} W_j  \Big]  \ind\Big(W_{i+\ell}=0,\forall \ell \in \zahl{k-1}\Big) \ind\Big(W_{i+k}>0\Big) \Biggiven \mX, \mY\Big]\Big\}^2\\
  =& \Big\{\sum_{i=1}^{n-1} \sum_{k=1}^{n-i} \lvert S_{i,k} \rvert \Big[ \Big(1-\frac{k-1}{n}\Big)^{n-1} - 2 \Big(1-\frac{k}{n}\Big)^{n-1} + \Big(1-\frac{k+1}{n}\Big)^{n-1} \Big] \Big\}^2\\
  =& \Big(\sum_{i=1}^{n-1} \sum_{k=1}^{n-i} \lvert S_{i,k} \rvert c_{n,n-1,k} \Big)^2.
  \yestag\label{eq:var,cond5}
\end{align*}
It then suffices to consider
\begin{align*}
  &\E\Big[\Big\{\sum_{i=1}^{n-1} \ind\Big(W_i>0\Big)\sum_{k=1}^{n-i}\Big[ \sum_{j\in S_{i,k}} W_j  \Big]  \ind\Big(W_{i+\ell}=0,\forall \ell \in \zahl{k-1}\Big) \ind\Big(W_{i+k}>0\Big)\Big\}^2 \Biggiven \mX, \mY\Big]\\
  =& \sum_{i=1}^{n-1} \E\Big[\Big\{\ind\Big(W_i>0\Big)\sum_{k=1}^{n-i}\Big[ \sum_{j\in S_{i,k}} W_j  \Big]  \ind\Big(W_{i+\ell}=0,\forall \ell \in \zahl{k-1}\Big) \ind\Big(W_{i+k}>0\Big)\Big\}^2 \Biggiven \mX, \mY\Big]\\
  &+ 2 \sum_{i,j=1,i<j}^{n-1} \E\Big[\ind\Big(W_i>0\Big)\sum_{k=1}^{n-i}\Big[ \sum_{t\in S_{i,k}} W_t  \Big]  \ind\Big(W_{i+s}=0,\forall s \in \zahl{k-1}\Big) \ind\Big(W_{i+k}>0\Big)\\
  & \ind\Big(W_j>0\Big)\sum_{\ell=1}^{n-j}\Big[ \sum_{t\in S_{j,\ell}} W_t  \Big]  \ind\Big(W_{j+s}=0,\forall s \in \zahl{\ell-1}\Big) \ind\Big(W_{j+\ell}>0\Big) \Biggiven \mX, \mY\Big].
  \yestag\label{eq:var,cond1}
\end{align*}

For the first term in \eqref{eq:var,cond1},
\begin{align*}
  &\E\Big[\Big\{\ind\Big(W_i>0\Big)\sum_{k=1}^{n-i}\Big[ \sum_{j\in S_{i,k}} W_j  \Big]  \ind\Big(W_{i+\ell}=0,\forall \ell \in \zahl{k-1}\Big) \ind\Big(W_{i+k}>0\Big)\Big\}^2 \Biggiven \mX, \mY\Big]\\
  =& \E\Big[\ind\Big(W_i>0\Big)\sum_{k=1}^{n-i}\Big[ \sum_{j\in S_{i,k}} W_j  \Big]^2  \ind\Big(W_{i+\ell}=0,\forall \ell \in \zahl{k-1}\Big) \ind\Big(W_{i+k}>0\Big) \Biggiven \mX, \mY\Big].
\end{align*}
By Lemma~\ref{lemma:multi} and the fact that $S_{i,k}$ is disjoint with $\{i, i+1,\ldots,i+k\}$,
\begin{align*}
  & \E\Big[\Big\{\ind\Big(W_i>0\Big)\sum_{k=1}^{n-i}\Big[ \sum_{j\in S_{i,k}} W_j  \Big]  \ind\Big(W_{i+\ell}=0,\forall \ell \in \zahl{k-1}\Big) \ind\Big(W_{i+k}>0\Big)\Big\}^2 \Biggiven \mX, \mY\Big]\\
  =& \sum_{k=1}^{n-i} \lvert S_{i,k} \rvert \Big[ \Big(1-\frac{k-1}{n}\Big)^{n-1} - 2 \Big(1-\frac{k}{n}\Big)^{n-1} + \Big(1-\frac{k+1}{n}\Big)^{n-1} \Big]\\
  &+ \sum_{k=1}^{n-i} \lvert S_{i,k} \rvert^2 \Big(1-\frac{1}{n}\Big) \Big[ \Big(1-\frac{k-1}{n}\Big)^{n-2} - 2 \Big(1-\frac{k}{n}\Big)^{n-2} + \Big(1-\frac{k+1}{n}\Big)^{n-2} \Big]\\
  =& \sum_{k=1}^{n-i} \Big(\lvert S_{i,k} \rvert c_{n,n-1,k} + \lvert S_{i,k} \rvert^2 c_{n,n-2,k}(1-1/n)\Big),
\end{align*}
and then
\begin{align*}
  & \sum_{i=1}^{n-1} \E\Big[\Big\{\ind\Big(W_i>0\Big)\sum_{k=1}^{n-i}\Big[ \sum_{j\in S_{i,k}} W_j  \Big]  \ind\Big(W_{i+\ell}=0,\forall \ell \in \zahl{k-1}\Big) \ind\Big(W_{i+k}>0\Big)\Big\}^2 \Biggiven \mX, \mY\Big]\\
  =& \sum_{i=1}^{n-1} \sum_{k=1}^{n-i} \Big(\lvert S_{i,k} \rvert c_{n,n-1,k} + \lvert S_{i,k} \rvert^2 c_{n,n-2,k}(1-1/n)\Big).
  \yestag\label{eq:var,cond6}
\end{align*}

For the second term in \eqref{eq:var,cond1}, for any $i,j \in \zahl{n-1}$ with $i<j$,
\begin{align*}
  & \E\Big[\ind\Big(W_i>0\Big)\sum_{k=1}^{n-i}\Big[ \sum_{t\in S_{i,k}} W_t  \Big]  \ind\Big(W_{i+s}=0,\forall s \in \zahl{k-1}\Big) \ind\Big(W_{i+k}>0\Big)\\
  & \ind\Big(W_j>0\Big)\sum_{\ell=1}^{n-j}\Big[ \sum_{t\in S_{j,\ell}} W_t  \Big]  \ind\Big(W_{j+s}=0,\forall s \in \zahl{\ell-1}\Big) \ind\Big(W_{j+\ell}>0\Big) \Biggiven \mX, \mY\Big]\\
  =& \sum_{k=1}^{n-i} \sum_{\ell=1}^{n-j} \E\Big[\ind\Big(W_i>0\Big)\Big[ \sum_{t\in S_{i,k}} W_t  \Big]  \ind\Big(W_{i+s}=0,\forall s \in \zahl{k-1}\Big) \ind\Big(W_{i+k}>0\Big)\\
  & \ind\Big(W_j>0\Big)\Big[ \sum_{t\in S_{j,\ell}} W_t  \Big]  \ind\Big(W_{j+s}=0,\forall s \in \zahl{\ell-1}\Big) \ind\Big(W_{j+\ell}>0\Big) \Biggiven \mX, \mY\Big]\\
  :=&\sum_{k=1}^{n-i} \sum_{\ell=1}^{n-j} T_{i,j,k,\ell}.
\end{align*}

(a) If $k>j-i$, $T_{i,j,k,\ell}$ is zero for any $\ell$ since $\ind(W_{i+s}=0,\forall s \in \zahl{k-1})\ind(W_j>0)=0$.

(b) If $k=j-i$, for any $\ell$,
\begin{align*}
  T_{i,j,k,\ell} =& \E\Big[\Big[ \sum_{t\in S_{i,k}} W_t  \Big]\Big[ \sum_{t\in S_{j,\ell}} W_t  \Big] \ind\Big(W_i>0\Big) \ind\Big(W_{i+s}=0,\forall s \in \zahl{k-1}\Big) \\
  & \ind\Big(W_j>0\Big)  \ind\Big(W_{j+s}=0,\forall s \in \zahl{\ell-1}\Big) \ind\Big(W_{j+\ell}>0\Big) \Biggiven \mX, \mY\Big].
\end{align*}

Note that under the event $\{W_{i+s}=0,\forall s \in \zahl{k-1}\}$ and $\{W_{j+s}=0,\forall s \in \zahl{\ell-1}\}$, we could decompose the above term as
\begin{align*}
  &\sum_{t\in S_{i,k}} W_t = \sum_{t\in S_{i,k \cap j,\ell}} W_t + \sum_{t\in S_{i,k \setminus j,\ell}} W_t + W_j \ind(j \in S_{i,k}) + W_{j+\ell} \ind(j+\ell \in S_{i,k}),\\
  &\sum_{t\in S_{j,\ell}} W_t = \sum_{t\in S_{i,k \cap j,\ell}} W_t + \sum_{t\in S_{j,\ell \setminus i,k}} W_t + W_i \ind(i \in S_{j,\ell}) + W_{i+k} \ind(i+k \in S_{j,\ell}),
\end{align*}
by noticing that $S_{i,k \cap j,\ell}, S_{i,k \setminus j,\ell}, S_{j,\ell \setminus i,k}$ are mutually disjoint, and these three sets are disjoint with $\{i,i+1,\ldots,i+s\}$ and $\{j,j+1,\ldots,j+\ell\}$. The above decomposition also holds for $k \le j-i$. 

If $k=j-i$, then $W_j \ind(j \in S_{i,k}) = W_{i+k} \ind(i+k \in S_{j,\ell}) = 0$.

We also have for the product of indicator functions in $T_{i,j,k,\ell}$,
\begin{align*}
  & \ind\Big(W_i>0\Big) \ind\Big(W_{i+s}=0,\forall s \in \zahl{k-1}\Big) \ind\Big(W_j>0\Big)  \ind\Big(W_{j+s}=0,\forall s \in \zahl{\ell-1}\Big) \ind\Big(W_{j+\ell}>0\Big)\\
  =& \Big[1-\ind\Big(W_i=0\Big)\Big] \Big[1-\ind\Big(W_j=0\Big)\Big] \Big[1-\ind\Big(W_{j+\ell}=0\Big)\Big] \ind\Big(W_{i+s}=0,\forall s \in \zahl{k-1}\Big) \ind\Big(W_{j+s}=0,\forall s \in \zahl{\ell-1}\Big).
\end{align*}

By Lemma~\ref{lemma:multi},
\begin{align*}
  &T_{i,j,k,\ell} \\
  =& \lvert S_{i,k \cap j,\ell} \rvert a_{n,n-1,k,\ell} + \lvert S_{i,k \cap j,\ell} \rvert^2 a_{n,n-2,k,\ell} (1-1/n) \\
  &+ (\lvert S_{i,k \cap j,\ell} \rvert \lvert S_{i,k \setminus j,\ell} \rvert + \lvert S_{i,k \cap j,\ell} \rvert \lvert S_{j,\ell \setminus i,k} \rvert + \lvert S_{i,k \setminus j,\ell} \rvert \lvert S_{j,\ell \setminus i,k} \rvert) a_{n,n-2,k,\ell} (1-1/n) + O(na_{n,n-2,k,\ell})\\
  =& \lvert S_{i,k \cap j,\ell} \rvert a_{n,n-1,k,\ell} + (\lvert S_{i,k \cap j,\ell} \rvert + \lvert S_{i,k \setminus j,\ell} \rvert)(\lvert S_{i,k \cap j,\ell} \rvert + \lvert S_{j,\ell \setminus i,k} \rvert) a_{n,n-2,k,\ell} (1-1/n) + O(na_{n,n-2,k,\ell}).
  \yestag\label{eq:var,cond2}
\end{align*}

(c) If $k<j-i$, for any $\ell$, the only difference with the case where $k=j-i$ is that we now have to decompose the product of indicator functions in $T_{i,j,k,\ell}$ as
\begin{align*}
  & \ind\Big(W_i>0\Big) \ind\Big(W_{i+s}=0,\forall s \in \zahl{k-1}\Big) \ind\Big(W_{i+k}>0\Big) \ind\Big(W_j>0\Big)  \ind\Big(W_{j+s}=0,\forall s \in \zahl{\ell-1}\Big) \ind\Big(W_{j+\ell}>0\Big)\\
  =& \Big[1-\ind\Big(W_i=0\Big)\Big] \Big[1-\ind\Big(W_{i+k}=0\Big)\Big] \Big[1-\ind\Big(W_j=0\Big)\Big] \Big[1-\ind\Big(W_{j+\ell}=0\Big)\Big] \\
  & \ind\Big(W_{i+s}=0,\forall s \in \zahl{k-1}\Big) \ind\Big(W_{j+s}=0,\forall s \in \zahl{\ell-1}\Big).
\end{align*}

By Lemma~\ref{lemma:multi},
\begin{align*}
  &T_{i,j,k,\ell} \\
  =& \lvert S_{i,k \cap j,\ell} \rvert b_{n,n-1,k,\ell} + (\lvert S_{i,k \cap j,\ell} \rvert + \lvert S_{i,k \setminus j,\ell} \rvert)(\lvert S_{i,k \cap j,\ell} \rvert + \lvert S_{j,\ell \setminus i,k} \rvert) b_{n,n-2,k,\ell} (1-1/n)\\
  &+ [\ind(j \in S_{i,k}) + \ind(j+\ell \in S_{i,k})](\lvert S_{i,k \cap j,\ell} \rvert + \lvert S_{j,\ell \setminus i,k} \rvert) a_{n,n-2,k,\ell} (1-1/n)\\
  &+ [\ind(i \in S_{j,\ell}) + \ind(i+k \in S_{j,\ell})](\lvert S_{i,k \cap j,\ell} \rvert + \lvert S_{i,k \setminus j,\ell} \rvert) a_{n,n-2,k,\ell} (1-1/n) + O(a_{n,n-2,k,\ell}).
  \yestag\label{eq:var,cond3}
\end{align*}

Combining \eqref{eq:var,cond2} and \eqref{eq:var,cond3},
\begin{align*}
  & \sum_{i,j=1,i<j}^{n-1} \E\Big[\ind\Big(W_i>0\Big)\sum_{k=1}^{n-i}\Big[ \sum_{t\in S_{i,k}} W_t  \Big]  \ind\Big(W_{i+s}=0,\forall s \in \zahl{k-1}\Big) \ind\Big(W_{i+k}>0\Big)\\
  & \ind\Big(W_j>0\Big)\sum_{\ell=1}^{n-j}\Big[ \sum_{t\in S_{j,\ell}} W_t  \Big]  \ind\Big(W_{j+s}=0,\forall s \in \zahl{\ell-1}\Big) \ind\Big(W_{j+\ell}>0\Big) \Biggiven \mX, \mY\Big]\\
  =& \sum_{i,j=1,i<j}^{n-1} \sum_{k=1}^{n-i} \sum_{\ell=1}^{n-j} \Big(\lvert S_{i,k \cap j,\ell} \rvert [a_{n,n-1,k,\ell}\ind(k=j-i) + b_{n,n-1,k,\ell}\ind(k<j-i)] \\
  & + (\lvert S_{i,k \cap j,\ell} \rvert + \lvert S_{i,k \setminus j,\ell} \rvert)(\lvert S_{i,k \cap j,\ell} \rvert + \lvert S_{j,\ell \setminus i,k} \rvert) [a_{n,n-2,k,\ell}\ind(k=j-i) + b_{n,n-2,k,\ell}\ind(k<j-i)] (1-1/n)\\
  &+ [\ind(j \in S_{i,k}) + \ind(j+\ell \in S_{i,k})](\lvert S_{i,k \cap j,\ell} \rvert + \lvert S_{j,\ell \setminus i,k} \rvert) a_{n,n-2,k,\ell}\ind(k<j-i) (1-1/n)\\
  &+ [\ind(i \in S_{j,\ell}) + \ind(i+k \in S_{j,\ell})](\lvert S_{i,k \cap j,\ell} \rvert + \lvert S_{i,k \setminus j,\ell} \rvert) a_{n,n-2,k,\ell}\ind(k<j-i) (1-1/n)\Big) + O(n^2).
  \yestag\label{eq:var,cond7}
\end{align*}

Plugging \eqref{eq:var,cond5}, \eqref{eq:var,cond6}, \eqref{eq:var,cond7} into \eqref{eq:var,cond4} completes the proof.
\end{proof}

\subsection{Proof of Lemma~\ref{lemma:card}}
\begin{proof}[Proof of Lemma~\ref{lemma:card}]
Note that all the quantities are invariant under probability transformation. By the continuity of the distribution and the probability integral transformation, we can assume without loss of generality that the distribution is the uniform distribution on $[0,1]$.

(a) For $S_{12}$, it is easy to see $\P(Y_1 \wedge Y_2 < Z_i < Y_1 \vee Y_2) = 1/3$ for any $i \in \zahl{n}$, and then $\E[S_{12}] = n/3$. For $i,j \in \zahl{n}$ and $i \neq j$, let $R_i,R_j$ be the rank of $Z_i,Z_j$ in $\{Y_1,Y_2,Z_i,Z_j\}$. Then
\[
  \P(Y_1 \wedge Y_2 < Z_i,Z_j < Y_1 \vee Y_2) = \P(R_i=2,R_j=3)+\P(R_i=3,R_j=2)=2 \times 2/4! = 1/6.
\]
Then
\[
  \E[S_{12}^2] = n(n-1)/6+n/3 = n^2/6+n/6.
\]

(b) For $S_{1234}$, for any $i \in \zahl{n}$, to have $(Y_1 \wedge Y_2) \vee (Y_3 \wedge Y_4) < Z_i < (Y_1 \vee Y_2) \wedge (Y_3 \vee Y_4)$, we have $Y_1,Y_2$ on different side of $Z_i$, as well as $Y_3,Y_4$. Then 
\[
  \P((Y_1 \wedge Y_2) \vee (Y_3 \wedge Y_4) < Z_i < (Y_1 \vee Y_2) \wedge (Y_3 \vee Y_4)\given Z_i) = 4Z_i^2(1-Z_i)^2,
\]
and then 
\[
  \P((Y_1 \wedge Y_2) \vee (Y_3 \wedge Y_4) < Z_i < (Y_1 \vee Y_2) \wedge (Y_3 \vee Y_4)) = 2/15.
\]
Then $\E[S_{1234}]=2n/15$. In the same way, for any $i,j \in \zahl{n}$ and $i \neq j$,
\[
  \P((Y_1 \wedge Y_2) \vee (Y_3 \wedge Y_4) < Z_i,Z_j < (Y_1 \vee Y_2) \wedge (Y_3 \vee Y_4) \given Z_i,Z_j) = 4(Z_i \wedge Z_j)^2(1-Z_i \vee Z_j)^2,
\]
and then 
\[
  \P((Y_1 \wedge Y_2) \vee (Y_3 \wedge Y_4) < Z_i,Z_j < (Y_1 \vee Y_2) \wedge (Y_3 \vee Y_4)) = 2/45.
\]
Then 
\[
  \E[S_{1234}^2] = 2n(n-1)/45+2n/15 = 2n^2/45+4n/45.
\]

(c) For $S_{1234}S_{12\setminus34}$, for any $i,j \in \zahl{n}$ and $i \neq j$, to have $(Y_1 \wedge Y_2) \vee (Y_3 \wedge Y_4) < Z_i < (Y_1 \vee Y_2) \wedge (Y_3 \vee Y_4)$ and $Y_1 \wedge Y_2 < Z_j < Y_1 \vee Y_2, Z_j > Y_3 \vee Y_4 ~{\rm or}~ Z_j < Y_3 \wedge Y_4$, we consider two cases $Z_i<Z_j$ and $Z_i>Z_j$ seperately. If $Z_i<Z_j$, we need one of $Y_1,Y_2$ smaller than $Z_i$, and the other larger than $Z_j$. We also need one of $Y_3,Y_4$ smaller than $Z_i$, and the other between $Z_i$ and $Z_j$. If $Z_i>Z_j$, we need one of $Y_1,Y_2$ smaller than $Z_j$, and the other larger than $Z_i$. We also need one of $Y_3,Y_4$ larger than $Z_i$, and the other between $Z_i$ and $Z_j$. Combining the two cases, we have
\begin{align*}
  & \P((Y_1 \wedge Y_2) \vee (Y_3 \wedge Y_4) < Z_i < (Y_1 \vee Y_2) \wedge (Y_3 \vee Y_4),\\
  & Y_1 \wedge Y_2 < Z_j < Y_1 \vee Y_2, Z_j > Y_3 \vee Y_4 ~{\rm or}~ Z_j < Y_3 \wedge Y_4 \given Z_i,Z_j)\\
  =& 4[\ind(Z_i<Z_j) Z_i^2 (Z_j-Z_i)(1-Z_j) + \ind(Z_j<Z_i) Z_j (Z_i-Z_j) (1-Z_i)^2],
\end{align*}
and then
\begin{align*}
  & \P((Y_1 \wedge Y_2) \vee (Y_3 \wedge Y_4) < Z_i < (Y_1 \vee Y_2) \wedge (Y_3 \vee Y_4),\\
  & Y_1 \wedge Y_2 < Z_j < Y_1 \vee Y_2, Z_j > Y_3 \vee Y_4 ~{\rm or}~ Z_j < Y_3 \wedge Y_4) = 1/45.
\end{align*}
Then
\[
  \E[S_{1234}S_{12\setminus34}] = n(n-1)/45.
\]

(d) From the symmetry, $\E[S_{1234}S_{34\setminus12}]=n(n-1)/45$.

(e) For $S_{12\setminus34}S_{34\setminus12}$, for any $i,j\in\zahl{n}$ and $i\neq j$, to have $Y_1 \wedge Y_2 < Z_i < Y_1 \vee Y_2, Z_i > Y_3 \vee Y_4 ~{\rm or}~ Z_i < Y_3 \wedge Y_4$ and $Y_3 \wedge Y_4 < Z_j < Y_3 \vee Y_4, Z_j > Y_1 \vee Y_2 ~{\rm or}~ Z_j < Y_1 \wedge Y_2$, we consider $Z_i<Z_j$ and $Z_i>Z_j$ seperately. If $Z_i<Z_j$, we need one of $Y_1,Y_2$ smaller than $Z_i$, and the other between $Z_i$ and $Z_j$. We also need one of $Y_3,Y_4$ larger than $Z_j$, and the other between $Z_i$ and $Z_j$. The case when $Z_i>Z_j$ can be analyzed in the same way. Then
\begin{align*}
  & \P(Y_1 \wedge Y_2 < Z_i < Y_1 \vee Y_2, Z_i > Y_3 \vee Y_4 ~{\rm or}~ Z_i < Y_3 \wedge Y_4,\\
  & Y_3 \wedge Y_4 < Z_j < Y_3 \vee Y_4, Z_j > Y_1 \vee Y_2 ~{\rm or}~ Z_j < Y_1 \wedge Y_2 \given Z_i,Z_j)\\
  =& 4[\ind(Z_i<Z_j) Z_i (Z_j-Z_i)^2 (1-Z_j) + \ind(Z_j<Z_i) Z_j (Z_i-Z_j)^2 (1-Z_i)],
\end{align*}
and then
\begin{align*}
  & \P(Y_1 \wedge Y_2 < Z_i < Y_1 \vee Y_2, Z_i > Y_3 \vee Y_4 ~{\rm or}~ Z_i < Y_3 \wedge Y_4,\\
  & Y_3 \wedge Y_4 < Z_j < Y_3 \vee Y_4, Z_j > Y_1 \vee Y_2 ~{\rm or}~ Z_j < Y_1 \wedge Y_2) = 1/45.
\end{align*}

Then
\[
  \E[S_{12\setminus34}S_{34\setminus12}] = n(n-1)/45.
\]

(f) For $S_{1223}$, to have $(Y_1 \wedge Y_2) \vee (Y_2 \wedge Y_3) < Z_i < (Y_1 \vee Y_2) \wedge (Y_2 \vee Y_3)$, we have $Y_1, Y_3$ on the same side of $Z_i$, and different side of $Y_2$. Then
\[
  \P((Y_1 \wedge Y_2) \vee (Y_2 \wedge Y_3) < Z_i < (Y_1 \vee Y_2) \wedge (Y_2 \vee Y_3)\given Z_i) = Z_i(1-Z_i),
\]
and then 
\[
  \P((Y_1 \wedge Y_2) \vee (Y_2 \wedge Y_3) < Z_i < (Y_1 \vee Y_2) \wedge (Y_2 \vee Y_3)) = 1/6.
\]

Then $\E[S_{1223}] = n/6$. We also have
\[
  \P((Y_1 \wedge Y_2) \vee (Y_2 \wedge Y_3) < Z_i,Z_j < (Y_1 \vee Y_2) \wedge (Y_2 \vee Y_3)\given Z_i,Z_j) = (Z_i\wedge Z_j) (1-Z_i \vee Z_j) (Z_i\wedge Z_j + 1-Z_i \vee Z_j),
\]
and then 
\[
  \P((Y_1 \wedge Y_2) \vee (Y_2 \wedge Y_3) < Z_i,Z_j < (Y_1 \vee Y_2) \wedge (Y_2 \vee Y_3)) = 1/15.
\]

Then 
\[
  \E[S_{1223}^2] = n(n-1)/15+n/6 = n^2/15+n/10.
\]

(g) For $S_{1223}S_{12\setminus23}$, by performing similar analysis, 
\begin{align*}
  & \P((Y_1 \wedge Y_2) \vee (Y_2 \wedge Y_3) < Z_i < (Y_1 \vee Y_2) \wedge (Y_2 \vee Y_3),\\
  & Y_1 \wedge Y_2 < Z_j < Y_1 \vee Y_2, Z_j > Y_2 \vee Y_3 ~{\rm or}~ Z_j < Y_2 \wedge Y_3 \given Z_i,Z_j)\\
  =& \ind(Z_i<Z_j) Z_i (Z_j-Z_i)(1-Z_j) + \ind(Z_j<Z_i) Z_j (Z_i-Z_j) (1-Z_i),
\end{align*}
and then
\begin{align*}
  & \P((Y_1 \wedge Y_2) \vee (Y_2 \wedge Y_3) < Z_i < (Y_1 \vee Y_2) \wedge (Y_2 \vee Y_3),\\
  & Y_1 \wedge Y_2 < Z_j < Y_1 \vee Y_2, Z_j > Y_2 \vee Y_3 ~{\rm or}~ Z_j < Y_2 \wedge Y_3) = 1/60.
\end{align*}

Then
\[
  \E[S_{1223}S_{12\setminus23}] = n(n-1)/60.
\]

(h) We can establish in the same way that
\[
  \E[S_{1223}S_{23\setminus12}] = \E[S_{12\setminus23}S_{23\setminus12}] = n(n-1)/60.
\]

(i) For $\ind(Y_1 \wedge Y_2 < Y_3 < Y_1 \vee Y_2)S_{1234}$, to have $Y_1 \wedge Y_2 < Y_3 < Y_1 \vee Y_2$ and $(Y_1 \wedge Y_2) \vee (Y_3 \wedge Y_4) < Z_i < (Y_1 \vee Y_2) \wedge (Y_3 \vee Y_4)$, we condition on $Y_3$ and $Z_i$. If $Y_3<Z_i$, then we need one of $Y_1,Y_2$ smaller than $Y_3$, and the other larger than $Z_i$. We also need $Y_4$ larger than $Z_i$. Then
\begin{align*}
  &\P(Y_1 \wedge Y_2 < Y_3 < Y_1 \vee Y_2, (Y_1 \wedge Y_2) \vee (Y_3 \wedge Y_4) < Z_i < (Y_1 \vee Y_2) \wedge (Y_3 \vee Y_4)\given Z_i,Y_3) \\
  =& 2Y_3(1-Z_i)^2\ind(Y_3 < Z_i) + 2Z_i^2(1-Y_3)\ind(Y_3 > Z_i),
\end{align*}
and then
\[
  \P(Y_1 \wedge Y_2 < Y_3 < Y_1 \vee Y_2, (Y_1 \wedge Y_2) \vee (Y_3 \wedge Y_4) < Z_i < (Y_1 \vee Y_2) \wedge (Y_3 \vee Y_4)) = \frac{1}{15}.
\]

Then
\[
  \E[\ind(Y_1 \wedge Y_2 < Y_3 < Y_1 \vee Y_2)S_{1234}] = n/15.
\]

(j) For $\ind(Y_1 \wedge Y_2 < Y_3 < Y_1 \vee Y_2)S_{34\setminus12}$, to have $Y_1 \wedge Y_2 < Y_3 < Y_1 \vee Y_2$ and $Y_3 \wedge Y_4 < Z_i < Y_3 \vee Y_4, Z_i > Y_1 \vee Y_2 ~{\rm or}~ Z_i < Y_1 \wedge Y_2$. Conditional on $Y_3$ and $Z_i$, if $Y_3<Z_i$, we need one of $Y_1,Y_2$ is smaller than $Y_3$, and the other between $Y_3$ and $Z_1$. We also need $Y_4$ larger than $Z_1$. Then
\begin{align*}
  &\P(Y_1 \wedge Y_2 < Y_3 < Y_1 \vee Y_2, Y_3 \wedge Y_4 < Z_i < Y_3 \vee Y_4, Z_i > Y_1 \vee Y_2 ~{\rm or}~ Z_i < Y_1 \wedge Y_2 \given Z_i,Y_3) \\
  =& 2Y_3(Z_i-Y_3)(1-Z_i)\ind(Y_3 < Z_i) + 2Z_i(Y_3-Z_i)(1-Y_3)\ind(Y_3 > Z_i),
\end{align*}
and then
\[
  \P(Y_1 \wedge Y_2 < Y_3 < Y_1 \vee Y_2, Y_3 \wedge Y_4 < Z_i < Y_3 \vee Y_4, Z_i > Y_1 \vee Y_2 ~{\rm or}~ Z_i < Y_1 \wedge Y_2) = \frac{1}{30}.
\]

Then
\[
  \E[\ind(Y_1 \wedge Y_2 < Y_3 < Y_1 \vee Y_2)S_{34\setminus12}] = n/30.
\]

The proof is thus complete.
\end{proof}

\subsection{Proof of Lemma~\ref{lemma:var}}

\begin{proof}[Proof of Lemma~\ref{lemma:var}]
By Lemma~\ref{lemma:card}, we have 
\[
\E[\lvert S_{i,k} \rvert] = (n-k-1)/3~~~ {\rm and}~~~ \E[\lvert S_{i,k} \rvert^2] = (n-k-1)^2/6 + (n-k-1)/6. 
\]
One could then obtain
\begin{align*}
  \E\Big[\sum_{i=1}^{n-1} \sum_{k=1}^{n-i} \lvert S_{i,k} \rvert c_{n,n-1,k}\Big] =  \sum_{i=1}^{n-1} \sum_{k=1}^{n-i} \Big(\frac{1}{3}(n-k-1)\Big) c_{n,n-1,k} = O(n^2),
  \yestag\label{eq:var1}
\end{align*}
and
\begin{align*}
  \E\Big[\sum_{i=1}^{n-1} \sum_{k=1}^{n-i} \lvert S_{i,k} \rvert^2 c_{n,n-2,k}\Big] = \frac{1}{6} \sum_{i=1}^{n-1} \sum_{k=1}^{n-i} \Big((n-k-1)(n-k)\Big) c_{n,n-2,k} = \frac{1}{6}\Big(1-\frac{1}{e}\Big)n^3 + O(n^2).
  \yestag\label{eq:var2}
\end{align*}

When $k<j-i$, $i,j,k,\ell$ are distinct. Again by Lemma~\ref{lemma:card}, in this case we have
\[
  \E[\lvert S_{i,k \cap j,\ell} \rvert] = \frac{2}{15}(n-k-\ell-2),
\]
and
\[
  \E[(\lvert S_{i,k \cap j,\ell} \rvert + \lvert S_{i,k \setminus j,\ell} \rvert)(\lvert S_{i,k \cap j,\ell} \rvert + \lvert S_{j,\ell \setminus i,k} \rvert)] = \frac{1}{9}(n-k-\ell-2)^2 + \frac{1}{45}(n-k-\ell-2),
\]
and
\begin{align*}
  &\E\{[\ind(j \in S_{i,k}) + \ind(j+\ell \in S_{i,k})](\lvert S_{i,k \cap j,\ell} \rvert + \lvert S_{j,\ell \setminus i,k} \rvert) + [\ind(i \in S_{j,\ell}) + \ind(i+k \in S_{j,\ell})](\lvert S_{i,k \cap j,\ell} \rvert + \lvert S_{i,k \setminus j,\ell} \rvert)\} \\
  =& 4\Big(\frac{1}{15} + \frac{1}{30}\Big) (n-k-\ell-2) = \frac{2}{5} (n-k-\ell-2).
\end{align*}

Then
\begin{align*}
  &\E\Big[\sum_{i,j=1,i<j}^{n-1} \sum_{k=1}^{n-i} \sum_{\ell=1}^{n-j} \lvert S_{i,k \cap j,\ell} \rvert b_{n,n-1,k,\ell}\ind(k<j-i)\Big] = \E\Big[\sum_{i=1}^{n-3}\sum_{j=i+2}^{n-1} \sum_{k=1}^{j-i-1} \sum_{\ell=1}^{n-j} \lvert S_{i,k \cap j,\ell} \rvert b_{n,n-1,k,\ell}\Big]\\
  =& \frac{2}{15} \sum_{i=1}^{n-3}\sum_{j=i+2}^{n-1} \sum_{k=1}^{j-i-1} \sum_{\ell=1}^{n-j} (n-k-\ell-2) b_{n,n-1,k,\ell}\\
  =& \frac{2}{15} \sum_{k=1}^{n-4}\sum_{\ell=1}^{n-k-3} \sum_{j=k+2}^{n-\ell} \sum_{i=1}^{j-k-1} (n-k-\ell-2) b_{n,n-1,k,\ell}\\
  =& \frac{1}{15} \sum_{k=1}^{n-4}\sum_{\ell=1}^{n-k-3} (n-k-\ell) (n-k-\ell-1) (n-k-\ell-2) b_{n,n-1,k,\ell}\\
  =& \frac{1}{15} \Big(1-\frac{2}{e} + \frac{1}{e^2}\Big) n^3 + O(n^2),
  \yestag\label{eq:var3}
\end{align*}
and
\begin{align*}
  & \E\Big[\sum_{i,j=1,i<j}^{n-1} \sum_{k=1}^{n-i} \sum_{\ell=1}^{n-j} (\lvert S_{i,k \cap j,\ell} \rvert + \lvert S_{i,k \setminus j,\ell} \rvert)(\lvert S_{i,k \cap j,\ell} \rvert + \lvert S_{j,\ell \setminus i,k} \rvert) b_{n,n-2,k,\ell}\ind(k<j-i)\Big]\\
  =& \E\Big[\sum_{i=1}^{n-3}\sum_{j=i+2}^{n-1} \sum_{k=1}^{j-i-1} \sum_{\ell=1}^{n-j} (\lvert S_{i,k \cap j,\ell} \rvert + \lvert S_{i,k \setminus j,\ell} \rvert)(\lvert S_{i,k \cap j,\ell} \rvert + \lvert S_{j,\ell \setminus i,k} \rvert) b_{n,n-2,k,\ell}\Big]\\
  =& \frac{1}{9} \sum_{i=1}^{n-3}\sum_{j=i+2}^{n-1} \sum_{k=1}^{j-i-1} \sum_{\ell=1}^{n-j} (n-k-\ell-2)^2 b_{n,n-2,k,\ell} + \frac{1}{45} \sum_{i=1}^{n-3}\sum_{j=i+2}^{n-1} \sum_{k=1}^{j-i-1} \sum_{\ell=1}^{n-j} (n-k-\ell-2) b_{n,n-2,k,\ell}\\
  =& \frac{1}{18} \sum_{k=1}^{n-4}\sum_{\ell=1}^{n-k-3} (n-k-\ell) (n-k-\ell-1) (n-k-\ell-2)^2 b_{n,n-2,k,\ell} \\
  & + \frac{1}{90} \sum_{k=1}^{n-4}\sum_{\ell=1}^{n-k-3} (n-k-\ell) (n-k-\ell-1) (n-k-\ell-2) b_{n,n-2,k,\ell}\\
  =& \frac{1}{18} \Big[\Big(1-\frac{2}{e} + \frac{1}{e^2}\Big) n^4 + \Big(-13 + \frac{15}{e}-\frac{3}{e^2}\Big)n^3 \Big] + \frac{1}{90} \Big(1-\frac{2}{e} + \frac{1}{e^2}\Big) n^3 + O(n^2),
  \yestag\label{eq:var4}
\end{align*}
and
\begin{align*}
  & \E\Big[\sum_{i,j=1,i<j}^{n-1} \sum_{k=1}^{n-i} \sum_{\ell=1}^{n-j}\Big( [\ind(j \in S_{i,k}) + \ind(j+\ell \in S_{i,k})](\lvert S_{i,k \cap j,\ell} \rvert + \lvert S_{j,\ell \setminus i,k} \rvert) a_{n,n-2,k,\ell}\ind(k<j-i) \\
  &+ [\ind(i \in S_{j,\ell}) + \ind(i+k \in S_{j,\ell})](\lvert S_{i,k \cap j,\ell} \rvert + \lvert S_{i,k \setminus j,\ell} \rvert) a_{n,n-2,k,\ell}\ind(k<j-i) \Big)\Big]\\
  =& \frac{2}{5} \sum_{i=1}^{n-3}\sum_{j=i+2}^{n-1} \sum_{k=1}^{j-i-1} \sum_{\ell=1}^{n-j} (n-k-\ell-2) a_{n,n-2,k,\ell}\\
  =& \frac{1}{5} \sum_{k=1}^{n-4}\sum_{\ell=1}^{n-k-3} (n-k-\ell) (n-k-\ell-1) (n-k-\ell-2) a_{n,n-2,k,\ell}\\
  =& \frac{1}{5} \Big(1-\frac{1}{e}\Big) n^3 + O(n^2).
  \yestag\label{eq:var5}
\end{align*}

When $k=j-i$, we have $j = i+k$ and then by Lemma~\ref{lemma:card},
\[
  \E[\lvert S_{i,k \cap j,\ell} \rvert] = \frac{1}{6}(n-k-\ell-1),
\]
and
\[
  \E[(\lvert S_{i,k \cap j,\ell} \rvert + \lvert S_{i,k \setminus j,\ell} \rvert)(\lvert S_{i,k \cap j,\ell} \rvert + \lvert S_{j,\ell \setminus i,k} \rvert)] = \frac{7}{60}(n-k-\ell-1)^2 + \frac{1}{20}(n-k-\ell-1).
\]

Then
\begin{align*}
  &\E\Big[\sum_{i,j=1,i<j}^{n-1} \sum_{k=1}^{n-i} \sum_{\ell=1}^{n-j} \lvert S_{i,k \cap j,\ell} \rvert a_{n,n-1,k,\ell}\ind(k=j-i)\Big] = \E\Big[\sum_{i=1}^{n-1} \sum_{k=1}^{n-i} \sum_{\ell=1}^{n-i-k} \lvert S_{i,k \cap i+k,\ell} \rvert a_{n,n-1,k,\ell}\Big]\\
  =& \frac{1}{6} \sum_{i=1}^{n-1} \sum_{k=1}^{n-i} \sum_{\ell=1}^{n-i-k} (n-k-\ell-1) a_{n,n-1,k,\ell} = \frac{1}{6} \sum_{k=1}^{n-2} \sum_{\ell=1}^{n-k-1} \sum_{i=1}^{n-k-\ell} (n-k-\ell-1) a_{n,n-1,k,\ell}\\
  =& \frac{1}{6} \sum_{k=1}^{n-2} \sum_{\ell=1}^{n-k-1} (n-k-\ell) (n-k-\ell-1) a_{n,n-1,k,\ell} = O(n^2),
  \yestag\label{eq:var6}
\end{align*}
and
\begin{align*}
  & \E\Big[\sum_{i,j=1,i<j}^{n-1} \sum_{k=1}^{n-i} \sum_{\ell=1}^{n-j} (\lvert S_{i,k \cap j,\ell} \rvert + \lvert S_{i,k \setminus j,\ell} \rvert)(\lvert S_{i,k \cap j,\ell} \rvert + \lvert S_{j,\ell \setminus i,k} \rvert) a_{n,n-2,k,\ell}\ind(k=j-i)\Big]\\
  =& \E\Big[\sum_{i=1}^{n-1} \sum_{k=1}^{n-i} \sum_{\ell=1}^{n-i-k} (\lvert S_{i,k \cap i+k,\ell} \rvert + \lvert S_{i,k \setminus i+k,\ell} \rvert)(\lvert S_{i,k \cap i+k,\ell} \rvert + \lvert S_{i+k,\ell \setminus i,k} \rvert) a_{n,n-2,k,\ell}\Big]\\
  =& \frac{7}{60} \sum_{i=1}^{n-1} \sum_{k=1}^{n-i} \sum_{\ell=1}^{n-i-k} (n-k-\ell-1)^2 a_{n,n-2,k,\ell} + \frac{1}{20} \sum_{i=1}^{n-1} \sum_{k=1}^{n-i} \sum_{\ell=1}^{n-i-k}(n-k-\ell-1) a_{n,n-2,k,\ell}\\
  =& \frac{7}{60} \sum_{k=1}^{n-2} \sum_{\ell=1}^{n-k-1} (n-k-\ell) (n-k-\ell-1)^2 a_{n,n-2,k,\ell} \\
  & + \frac{1}{20} \sum_{k=1}^{n-2} \sum_{\ell=1}^{n-k-1} (n-k-\ell) (n-k-\ell-1) a_{n,n-2,k,\ell}\\
  =& \frac{7}{60} \Big(1-\frac{1}{e}\Big) n^3 + O(n^2).
  \yestag\label{eq:var7}
\end{align*}

Combining all the pieces from \eqref{eq:var1} to \eqref{eq:var7}, we have
\begin{align*}
  & \sum_{i=1}^{n-1} \sum_{k=1}^{n-i} \Big(\lvert S_{i,k} \rvert c_{n,n-1,k} + \lvert S_{i,k} \rvert^2 c_{n,n-2,k}(1-1/n)\Big) \\
  & + 2 \sum_{i,j=1,i<j}^{n-1} \sum_{k=1}^{n-i} \sum_{\ell=1}^{n-j} \Big(\lvert S_{i,k \cap j,\ell} \rvert [a_{n,n-1,k,\ell}\ind(k=j-i) + b_{n,n-1,k,\ell}\ind(k<j-i)] \\
  & + (\lvert S_{i,k \cap j,\ell} \rvert + \lvert S_{i,k \setminus j,\ell} \rvert)(\lvert S_{i,k \cap j,\ell} \rvert + \lvert S_{j,\ell \setminus i,k} \rvert) [a_{n,n-2,k,\ell}\ind(k=j-i) + b_{n,n-2,k,\ell}\ind(k<j-i)] (1-1/n) \\
  &+ [\ind(j \in S_{i,k}) + \ind(j+\ell \in S_{i,k})](\lvert S_{i,k \cap j,\ell} \rvert + \lvert S_{j,\ell \setminus i,k} \rvert) a_{n,n-2,k,\ell}\ind(k<j-i) (1-1/n)\\
  &+ [\ind(i \in S_{j,\ell}) + \ind(i+k \in S_{j,\ell})](\lvert S_{i,k \cap j,\ell} \rvert + \lvert S_{i,k \setminus j,\ell} \rvert) a_{n,n-2,k,\ell}\ind(k<j-i) (1-1/n)\Big)\\
  =& \frac{1}{6}\Big(1-\frac{1}{e}\Big)n^3 + \frac{2}{15} \Big(1-\frac{2}{e} + \frac{1}{e^2}\Big) n^3 + \frac{7}{30} \Big(1-\frac{1}{e}\Big) n^3 + \frac{1}{9} \Big(1-\frac{2}{e} + \frac{1}{e^2}\Big) n^4 - \frac{1}{9} \Big(1-\frac{2}{e} + \frac{1}{e^2}\Big) n^3 \\
  & + \frac{1}{9} \Big(-13 + \frac{15}{e}-\frac{3}{e^2}\Big)n^3 + \frac{1}{45} \Big(1-\frac{2}{e} + \frac{1}{e^2}\Big) n^3 + \frac{2}{5} \Big(1-\frac{1}{e}\Big) n^3 + O(n^2)\\
  =& \frac{1}{9} \Big(1-\frac{2}{e} + \frac{1}{e^2}\Big) n^4 + \Big(-\frac{3}{5} + \frac{7}{9}\frac{1}{e} - \frac{13}{45} \frac{1}{e^2}\Big) n^3 + O(n^2).
  \yestag\label{eq:var8}
\end{align*}

By the proof of Lemma~\ref{lemma:exp}, 
\begin{align*}
  & \E\Big[\Big(\sum_{i=1}^{n-1} \sum_{k=1}^{n-i} \lvert S_{i,k} \rvert c_{n,n-1,k} \Big)^2\Big] \ge \Big[\E\Big(\sum_{i=1}^{n-1} \sum_{k=1}^{n-i} \lvert S_{i,k} \rvert c_{n,n-1,k} \Big)\Big]^2 \\
  =& \frac{1}{9}\Big[\Big(1-\frac{1}{e}\Big)n^2 - \Big(3-\frac{1}{2e}\Big)n+ O(1) \Big]^2 = \frac{1}{9} \Big(1-\frac{1}{e}\Big)^2 n^4 - \frac{2}{9}\Big(1-\frac{1}{e}\Big)\Big(3-\frac{1}{2e}\Big)n^3 + O(n^2).
  \yestag\label{eq:var9}
\end{align*}

Plugging \eqref{eq:var8} and \eqref{eq:var9} to Lemma~\ref{lemma:var,cond} then yields
\begin{align*}
  & \E[\Var[\overline{\xi}_b \given \mX,\mY]] \le \frac{9}{n^4}\Big(1+O(n^{-2})\Big) \\
  & \Big[\frac{1}{9} \Big(1-\frac{2}{e} + \frac{1}{e^2}\Big) n^4 + \Big(-\frac{3}{5} + \frac{7}{9}\frac{1}{e} - \frac{13}{45} \frac{1}{e^2}\Big) n^3 - \frac{1}{9} \Big(1-\frac{1}{e}\Big)^2 n^4 + \frac{2}{9}\Big(1-\frac{1}{e}\Big)\Big(3-\frac{1}{2e}\Big)n^3 + O(n^2) \Big] + O\Big(\frac{1}{n^2}\Big)\\
  =& \Big(\frac{3}{5} - \frac{8}{5} \frac{1}{e^2}\Big) \frac{1}{n} + O\Big(\frac{1}{n^2}\Big),
\end{align*}
and the proof is thus complete.
\end{proof}

\subsection{Proof of Lemma~\ref{lemma:l2var}}

\begin{proof}[Proof of Lemma~\ref{lemma:l2var}]
From the definition of $[\tL_i]_{i=1}^n$, we have
\begin{align*}
  \sum_{i=1}^n W_i \tL_i(n-\tL_i) =& \sum_{i=1}^n W_i \Big(\sum_{j=1}^n W_j \ind(Y_j \ge Y_i) \Big) \Big(n - \sum_{j=1}^n W_j \ind(Y_j \ge Y_i)\Big) \\
  =& \sum_{i=1}^n W_i \Big(\sum_{j=1}^n W_j \ind(Y_j \ge Y_i) \Big) \Big(\sum_{j=1}^n W_j \ind(Y_j < Y_i)\Big).
\end{align*}

We first study the mean. Note that $(W_1,\ldots,W_n)$ is independent of $\mX$ and $\mY$. We can assume $[Y_i]_{i=1}^n$ is strictly increasing with probability one without loss of generality conditional on $\mX$ and $\mY$. Then 
\begin{align*}
  &\E\Big[\sum_{i=1}^n W_i \Big(\sum_{j=1}^n W_j \ind(Y_j \ge Y_i) \Big) \Big(\sum_{j=1}^n W_j \ind(Y_j < Y_i)\Big) \Biggiven \mX, \mY \Big] \\
  =& \E\Big[ \sum_{i=1}^n W_i \Big( n-\sum_{j=1}^{i-1} W_j \Big) \Big(\sum_{j=1}^{i-1} W_j\Big)\Big] \\
  =& n\sum_{i=1}^n \E\Big[W_i\Big(\sum_{j=1}^{i-1} W_j\Big)\Big] - \sum_{i=1}^n \E\Big[W_i\Big(\sum_{j=1}^{i-1} W_j\Big)^2\Big].
\end{align*}

Since $(W_1,\ldots,W_n) \sim M_n(n;1/n,\ldots,1/n)$, for any $i \in 
\zahl{n}$, we have 
\[
\sum_{j=1}^{i-1}W_j \given W_i \sim {\rm Bin}(n-W_i,\frac{i-1}{n-1}) ~~~{\rm and}~~~ W_i \sim {\rm Bin}(n,\frac{1}{n}). 
\]
Then for any $i \in \zahl{n}$,
\begin{align*}
  \E\Big[W_i\Big(\sum_{j=1}^{i-1} W_j\Big)\Big] = \E\Big[\frac{i-1}{n-1}W_i(n-W_i)\Big] = (i-1)(1-1/n),
\end{align*}
and
\begin{align*}
  & \E\Big[W_i\Big(\sum_{j=1}^{i-1} W_j\Big)^2\Big] = \E\Big[\frac{(i-1)(n-i)}{(n-1)^2}W_i(n-W_i)\Big] + \E\Big[\frac{(i-1)^2}{(n-1)^2}W_i(n-W_i)^2\Big]\\
  =& \frac{(i-1)(n-i)}{n} + \frac{(i-1)^2 (n^2-2n+2)}{n^2} = (i-1)(1-1/n) + (i-1)^2 (1-1/n)(1-2/n).
\end{align*}

It then holds that 
\begin{align*}
  &\E\Big[\sum_{i=1}^n W_i \Big(\sum_{j=1}^n W_j \ind(Y_j \ge Y_i) \Big) \Big(\sum_{j=1}^n W_j \ind(Y_j < Y_i)\Big) \Biggiven \mX, \mY \Big]\\
  =& \frac{(n-1)^2 (n^2+2n-2)}{6n} = \frac{1}{6}(n^3 - 5n+6-2n^{-1}).
  \yestag\label{eq:l2var1}
\end{align*}

We next consider the variance. Note that
\begin{align*}
  &\Var\Big[\sum_{i=1}^n W_i \Big(\sum_{j=1}^n W_j \ind(Y_j \ge Y_i) \Big) \Big(\sum_{j=1}^n W_j \ind(Y_j < Y_i)\Big) \Biggiven \mX, \mY \Big] \\
  =& \Var\Big[ \sum_{i=1}^n W_i \Big( n-\sum_{j=1}^{i-1} W_j \Big) \Big(\sum_{j=1}^{i-1} W_j\Big)\Big] \\
  =& \Var\Big[\sum_{i=1}^n W_i \Big( \sum_{j=i}^{n} W_j \Big) \Big(\sum_{j=1}^{i-1} W_j\Big)\Big].
\end{align*}

We can consider $Z_1,Z_2,\ldots,Z_n$ i.i.d. with distribution $\P(Z_1 = k) = 1/n$ for any $k \in \zahl{n}$. Then $(W_1,\ldots,W_n)$ has the same law with $(\sum_{k=1}^n \ind(Z_k=1), \ldots, \sum_{k=1}^n \ind(Z_k=n))$.

To apply the Efron-Stein inequality, consider $\tZ_1$ with the same distribution as $Z_1$ and is independent of $Z_1,\ldots,Z_n$. Let $(\tW_1, \ldots, \tW_n)$ be the new values by replacing $Z_1$ with $\tZ_1$. Assume $Z_1 = k$ and $\tZ_1 = \ell$ for some $k,\ell \in \zahl{n}$. If $k = \ell$, then $W_i = \tW_i$ for all $i \in \zahl{n}$, and then the difference is zero. Then we assume $k<\ell$ without loss of generality. In this case, $\tW_k = W_k-1$, $\tW_\ell = W_\ell + 1$, and $\tW_i = W_i$ for other $i \in \zahl{n}$. Thus we have the following results for different $i \in \zahl{n}$,
\begin{itemize}
  \item $i = k$, 
  \begin{align*}
    &\tW_i \Big( \sum_{j=i}^{n} \tW_j \Big) \Big(\sum_{j=1}^{i-1} \tW_j\Big) - W_i \Big( \sum_{j=i}^{n} W_j \Big) \Big(\sum_{j=1}^{i-1} W_j\Big) \\
    =& (W_k-1) \Big( \sum_{j=k}^{n} W_j \Big) \Big(\sum_{j=1}^{k-1} W_j\Big) - W_k \Big( \sum_{j=k}^{n} W_j \Big) \Big(\sum_{j=1}^{k-1} W_j\Big) \\
    =& -\Big( \sum_{j=k}^{n} W_j \Big) \Big(\sum_{j=1}^{k-1} W_j\Big).
  \end{align*}
  \item $i = \ell$,
  \begin{align*}
    &\tW_i \Big( \sum_{j=i}^{n} \tW_j \Big) \Big(\sum_{j=1}^{i-1} \tW_j\Big) - W_i \Big( \sum_{j=i}^{n} W_j \Big) \Big(\sum_{j=1}^{i-1} W_j\Big) \\
    =& (W_\ell+1) \Big( \sum_{j=\ell}^{n} W_j + 1\Big) \Big(\sum_{j=1}^{\ell-1} W_j - 1\Big) - W_\ell \Big( \sum_{j=\ell}^{n} W_j \Big) \Big(\sum_{j=1}^{\ell-1} W_j\Big)\\
    =& \Big( \sum_{j=\ell}^{n} W_j + 1\Big) \Big(\sum_{j=1}^{\ell-1} W_j - 1\Big) + W_\ell \Big(\sum_{j=1}^{\ell-1} W_j - \sum_{j=\ell}^{n} W_j - 1\Big).
  \end{align*}
  \item $k<i<\ell$,
  \begin{align*}
    &\tW_i \Big( \sum_{j=i}^{n} \tW_j \Big) \Big(\sum_{j=1}^{i-1} \tW_j\Big) - W_i \Big( \sum_{j=i}^{n} W_j \Big) \Big(\sum_{j=1}^{i-1} W_j\Big) \\
    =& W_i \Big( \sum_{j=i}^{n} W_j +1 \Big) \Big(\sum_{j=1}^{i-1} W_j - 1\Big) - W_i \Big( \sum_{j=i}^{n} W_j \Big) \Big(\sum_{j=1}^{i-1} W_j\Big)\\
    =& W_i \Big(\sum_{j=1}^{i-1} W_j - \sum_{j=i}^{n} W_j - 1\Big).
  \end{align*}
\end{itemize}

For other $i \in \zahl{n}$, the difference is zero. 

We thus have
\begin{align*}
  &\sum_{i=1}^n \tW_i \Big( \sum_{j=i}^{n} \tW_j \Big) \Big(\sum_{j=1}^{i-1} \tW_j\Big) - \sum_{i=1}^n W_i \Big( \sum_{j=i}^{n} W_j \Big) \Big(\sum_{j=1}^{i-1} W_j\Big)\\
  =& -\Big( \sum_{j=k}^{n} W_j \Big) \Big(\sum_{j=1}^{k-1} W_j\Big) + \Big( \sum_{j=\ell}^{n} W_j + 1\Big) \Big(\sum_{j=1}^{\ell-1} W_j - 1\Big) + \sum_{i=k+1}^\ell W_i \Big(\sum_{j=1}^{i-1} W_j - \sum_{j=i}^{n} W_j - 1\Big)\\
  =& \Big[-\Big( \sum_{j=k}^{\ell-1} W_j \Big) \Big(\sum_{j=1}^{k-1} W_j\Big) -\Big( \sum_{j=\ell}^{n} W_j \Big) \Big(\sum_{j=1}^{k-1} W_j\Big)\Big] + \Big[ \Big( \sum_{j=\ell}^{n} W_j \Big) \Big(\sum_{j=1}^{k-1} W_j\Big) \\
  & + \Big( \sum_{j=\ell}^{n} W_j \Big) \Big(\sum_{j=k}^{\ell-1} W_j\Big) - \sum_{j=\ell}^{n} W_j + \sum_{j=1}^{k-1} W_j + \sum_{j=k}^{\ell-1} W_j - 1\Big] + \Big[ \sum_{i=k+1}^{\ell} W_i  \Big(\sum_{j=1}^{k-1} W_j\Big)\\
  &+ \sum_{i=k+1}^{\ell} W_i  \Big(\sum_{j=k}^{i-1} W_j\Big) - \sum_{i=k+1}^{\ell} W_i \Big(\sum_{j=i}^{\ell-1} W_j\Big) - \sum_{i=k+1}^{\ell} W_i  \Big(\sum_{j=\ell}^{n} W_j\Big) - \sum_{i=k+1}^{\ell} W_i \Big].
\end{align*}

Note that
\begin{align*}
  \sum_{i=k+1}^\ell W_i \Big(\sum_{j=k}^{i-1} W_j \Big) = \sum_{i=k}^{\ell-1} W_i \Big(\sum_{j=i+1}^{\ell} W_j \Big),~~ \sum_{i=1}^n W_i = n.
\end{align*}

By some algebra,
\begin{align*}
  &\sum_{i=1}^n \tW_i \Big( \sum_{j=i}^{n} \tW_j \Big) \Big(\sum_{j=1}^{i-1} \tW_j\Big) - \sum_{i=1}^n W_i \Big( \sum_{j=i}^{n} W_j \Big) \Big(\sum_{j=1}^{i-1} W_j\Big)\\
  =& (W_\ell - W_k + 1) \Big( \sum_{j=1}^{k-1}W_j - \sum_{j=\ell}^n W_j -1 \Big) + (W_k + W_\ell)\Big( \sum_{j=k}^{\ell-1} W_j\Big) - \sum_{j=k}^{\ell-1} W_j^2.
\end{align*}

Using the fact that $\sum_{i=1}^n W_i = n$ and all $W_i$'s are nonnegative, we have
\begin{align*}
  &\Big\lvert \sum_{i=1}^n \tW_i \Big( \sum_{j=i}^{n} \tW_j \Big) \Big(\sum_{j=1}^{i-1} \tW_j\Big) - \sum_{i=1}^n W_i \Big( \sum_{j=i}^{n} W_j \Big) \Big(\sum_{j=1}^{i-1} W_j\Big) \Big\rvert\\
  \le& (W_\ell + W_k + 1)(n+1) + (W_\ell + W_k) n + \sum_{i=1}^n W_i^2
\end{align*}

Note that $Z_1,\ldots, Z_n, \tZ_1$ are i.i.d, and $W_k \sim {\rm Bin}(n,\frac{1}{n})$ for any $k \in \zahl{n}$. Then from the Efron-Stein inequality,
\begin{align*}
  \Var\Big[\sum_{i=1}^n W_i \Big(\sum_{j=1}^n W_j \ind(Y_j \ge Y_i) \Big) \Big(\sum_{j=1}^n W_j \ind(Y_j < Y_i)\Big) \Biggiven \mX, \mY \Big] \lesssim n^3.
  \yestag\label{eq:l2var2}
\end{align*}

From the Chebyshev inequality, and the bias–variance decomposition using \eqref{eq:l2var1} and \eqref{eq:l2var2}, for any $\epsilon>0$, for almost all $\mX,\mY$,
\begin{align*}
  &\P\Big(\Big\lvert \sum_{i=1}^n W_i \tL_i(n-\tL_i) - \frac{n^3}{6} \Big\rvert \ge \epsilon \Biggiven \mX,\mY \Big)\\
  \le& \epsilon^{-2} \Big[ \Var\Big[\sum_{i=1}^n W_i \tL_i(n-\tL_i) \Biggiven \mX, \mY\Big] + \E\Big[ \Big(\sum_{i=1}^n W_i \tL_i(n-\tL_i) - \frac{n^3}{6}\Big)^2 \Biggiven \mX,\mY \Big] \Big]\\
  \lesssim& \epsilon^{-2} (n^3 + n^2) \lesssim \epsilon^{-2} n^3,
\end{align*}
and then the proof is complete.
\end{proof}

\section*{Acknowledgement}

The authors would like to thank Sourav Chatterjee for confirming the failure of the bootstrap for his rank correlation and for generously sharing his insights with the authors. The authors would also like to thank Peter Bickel for pointing out the works of Beran and for discussing the relation between bootstrap inconsistency and adaptive estimators, to thank Mathias Drton for sharing his work \citep{drton2011quantifying} that led the authors to \cite{beran1997diagnosing} and \cite{samworth2003note}, and to thank Andres Santos for explaining his findings in \cite{fang2019inference}. The authors also benefited from discussions with Lihua Lei, Bodhisattva Sen, and Jon Wellner. 

 This work was partly motivated by discussions with Mona Azadkia, David Childers, Peng Ding, Andreas Hagemann, and Mauricio Olivares. In particular, Peng posed a question to the second author regarding the practical relevance of the results presented in \cite{lin2022limit} and noted that bootstrap is commonly used in practice. This paper serves as a response to Peng's inquiry.

{
\bibliographystyle{apalike}
\bibliography{AMS}
}

\end{document}